\newcommand{\msc}[2][2000]{%
  \let\@oldtitle\@title%
  \gdef\@title{\@oldtitle\footnotetext{#1 \emph{Mathematics subject
        classification.} #2}}%
}
\theoremstyle{plain}
\newtheorem{theorem}{Theorem} [section]
\newtheorem{definition}[theorem]{Definition}
\newtheorem{lemma}[theorem]{Lemma}
\newtheorem{proposition}[theorem]{Proposition}
\theoremstyle{remark}
\newtheorem{remark}[theorem]{Remark}
\def\C{{\mathbb C}}
\def\R{{\mathbb R}}
\def\N{{\mathbb N}}
\def\F{\mathcal F}
\def\({\left(}
\def\){\right)}
\def\<{\left\langle}
\def\>{\right\rangle}
\def\le{\leqslant}
\def\ge{\geqslant}
\def\d{{\partial}}
\def\eps{\varepsilon}
\def\Tend#1#2{\mathop{\longrightarrow}\limits_{#1\rightarrow#2}}
\def\d{{\partial}}
\def\eps{\varepsilon}
\newcommand{\be}{\begin{equation}}
\newcommand{\ee}{\end{equation}}
\DeclareMathOperator{\RE}{Re}
\DeclareMathOperator{\IM}{Im}
\numberwithin{equation}{section}
\begin{document}

\title[Gaussons under  repulsive harmonic potential]{Nonuniqueness and
  nonlinear instability of Gaussons under  repulsive harmonic potential}
\author[R. Carles]{R\'emi Carles}
\author[C. Su]{Chunmei Su}

\address{Univ Rennes, CNRS\\ IRMAR - UMR 6625\\ F-35000
  Rennes, France}
\email{Remi.Carles@math.cnrs.fr}

\address{Yau Mathematical Sciences Center\\Tsinghua University\\
  Beijing 100084, China}
\email{sucm@tsinghua.edu.cn}

\begin{abstract}
We consider the Schr\"odinger equation with a nondispersive
logarithmic nonlinearity and a repulsive harmonic potential. For a
suitable range of the coefficients, there exist two positive
stationary solutions, each one generating a continuous family of
solitary waves. These solutions are Gaussian, and turn out to be
orbitally unstable. We also discuss the notion of ground state in this
setting: for any natural definition, the set of ground states is
empty.
\end{abstract}
\thanks{RC is supported by Rennes M\'etropole through its AIS program. }

\subjclass[2010]{Primary: 35Q55. Secondary:  35B35, 35C08, 37K40.}
\keywords{Nonlinear Schr\"odinger equation; logarithmic nonlinearity; ground states; instability.}

\maketitle

\section{Introduction}
\label{sec:intro}

We consider the equation
\begin{equation}
  \label{eq:logNLSrep}
  i\d_t u +\frac{1}{2}\Delta u = -\omega^2\frac{|x|^2}{2}u +\lambda
  u\ln\(|u|^2\),\quad x\in \R^d,
\end{equation}
in the case $\omega>0$ (repulsive harmonic potential) and
$\lambda<0$. The logarithmic Schr\"odinger equation
(\eqref{eq:logNLSrep} with $\omega=0$) was introduced in
\cite{BiMy76}, and has been considered in various fields of physics
since; see e.g. \cite{BEC,buljan,hansson,Hef85,KEB00,DMFGL03,Zlo10}
and references therein.  A special feature of the logarithmic
nonlinearity is that it leads to very special solitary waves, called
\emph{Gaussons} in \cite{BiMy76,BiMy79}: if $\lambda<0$, for any $\nu\in \R$,
\begin{equation*}
  e^{i\nu t} e^{\frac{\lambda d-\nu}{2\lambda}}e^{\lambda |x|^2}
\end{equation*}
is a solution to \eqref{eq:logNLSrep} (with $\omega=0$). These
solitary waves are orbitally stable, as proved in \cite{Caz83} (radial
case) and \cite{Ar16} (general case). In addition, still in the case
$\omega=0$, it is known that for $\lambda<0$, no solution is
  dispersive (\cite[Proposition~4.3]{Caz83}), while for $\lambda>0$,
every solution is dispersive, with an enhanced rate compared to the usual rate
of the free Schr\"odinger equation (\cite{CaGa18}).
\smallbreak

The logarithmic Schr\"odinger equation in the presence of a confining
harmonic potential was considered in physics in \cite{Bouharia2015},
\begin{equation}
  \label{eq:logNLSconf}
  i\d_t u +\frac{1}{2}\Delta u = \omega^2\frac{|x|^2}{2}u +\lambda
  u\ln\(|u|^2\),\quad x\in \R^d.
\end{equation}
 In the case $\lambda<0$ (\cite{ACS20}) as well as in the case
 $\lambda>0$ (\cite{CaFe-p}), generalized Gaussons exist, and are
 orbitally stable, in the sense introduced in \cite{CaLi82} (see Definition~\ref{def:stability} below for the definition in
 the case of \eqref{eq:logNLSrep}, the notion being the same for
 \eqref{eq:logNLSconf}).
 \smallbreak

 The case of an inverted, or repulsive harmonic potential as in
 \eqref{eq:logNLSrep}, does not seem to correspond to a realistic
 model, but constitutes an interesting mathematical toy. The potential
 $V(x) =-\omega^2\frac{|x|^2}{2}$
 is unbounded from below, and goes to $-\infty$ as fast as possible in
 order to guarantee that the Hamiltonian $-\frac{1}{2}\Delta +V(x)$ is
 essentially self-adjoint on $C_0^\infty(\R^d)$; see
 \cite{Dunford,ReedSimon2}. In the linear case $\lambda=0$, classical
 trajectories go to infinity  exponentially fast in time, the solution
 disperses exponentially in time, and the Sobolev norms grow
 exponentially in time (see e.g. \cite{CaSIMA}). Because of that,
 there are no long range effects (scattering theory) when a power-like
 nonlinearity is added (\cite{CaSIMA}), and at least in the case of an
 $L^2$-critical focusing nonlinearity,
 \begin{equation*}
  i\d_t u +\frac{1}{2}\Delta u = -\omega^2\frac{|x|^2}{2}u -
  |u|^{4/d}u,\quad x\in \R^d,
\end{equation*}
there exists no nontrivial  solitary wave $u(t,x)=e^{i\nu t}\phi(x)$
with $\phi\in L^2(\R^d)$ \cite{JoPa93,KaWe94}.
\smallbreak

In the case of \eqref{eq:logNLSrep}, the mass and the energy are
formally independent of time: they are given by
\begin{equation}\label{eq:ME}
\begin{aligned}
   M(u) &= \|u\|_{L^2(\R^d)}^2,\\
   E(u)&=\frac{1}{2}\|\nabla u\|_{L^2(\R^d)}^2-\frac{\omega^2}{2}\|x
   u\|_{L^2}^2+\lambda\int_{\R^d}    |u|^2\(\ln|u|^2-1\)dx.
  \end{aligned}
  \end{equation}
The energy has no definite sign, for two reasons: the repulsive
harmonic potential has a negative contribution in $E$, and the logarithmic
nonlinearity induces a potential energy with indefinite sign
(entropy). Introduce the space $\Sigma$ defined by
\begin{equation*}
  \Sigma =H^1\cap \F(H^1)= \left\{ f\in H^1(\R^d),\quad x\mapsto |x|
    f(x)\in L^2(\R^d)\right\},
\end{equation*}
and  equipped with the norm
\begin{align*}
  \|f\|_{\Sigma}^2 &= \|f\|_{L^2(\R^d)}^2 +\|\nabla f\|_{L^2(\R^d)}^2
                     +\int_{\R^d}|x|^2|f(x)|^2dx \\
  &=  \|f\|_{L^2(\R^d)}^2 +\<\(-\Delta+|x|^2\)f,f\>.
\end{align*}
It is proved in \cite[Proposition~1.3]{CaFe-p} that for $\lambda\in
\R$ and any
$u_0\in \Sigma$, there exists a unique solution $u \in L^\infty_{\rm
  loc}(\R;\Sigma)\cap C(\R;L^2(\R^d))$ to \eqref{eq:logNLSrep}, such
that $u_{\mid t=0}=u_0$. In addition, the mass $M$ and the energy $E$
are independent of time. 
In \cite{CaFe-p}, it is proved in addition that in the case
$\lambda>0$, every solution to \eqref{eq:logNLSrep} disperses
exponentially fast: in particular, there is no solitary wave in this
case.

\smallbreak

The situation is different in the case $\lambda<0$, and leads to
features which appear to be quite unique, in the context of the
logarithmic Schr\"odinger equation (with potential), and more
generally of nonlinear Schr\"odinger equations. In \cite{ZZ20}, it was
proven that \eqref{eq:logNLSrep} admits at least one positive bound
state, under some conditions on the coefficients, recalled below. Under suitable assumptions regarding the
parameters $\lambda$ and $\omega$, we exhibit two positive
stationary solutions.

 \bigbreak

 Due to the presence of the potential, \eqref{eq:logNLSrep} is not
invariant by translation in space, hence the definition below (as in \cite{ACS20}):
\begin{definition}\label{def:stability}
  A standing wave $u(t,x) = \phi(x)
  e^{i\nu t}$ solution to \eqref{eq:logNLSrep} is
  orbitally stable in the energy space if for any $\eps>0$, there exists $\eta>0$ such that if $u_0\in \Sigma$
satisfies $\|u_0-\phi\|_\Sigma<\eta$, then the solution $u$ to
\eqref{eq:logNLSrep} exists for all $t\in \R$, and
 \begin{equation*}
    \sup_{t\in \R}\inf_{\theta\in \R}\|u(t)
    -e^{i\theta}\phi\|_\Sigma<\eps.
  \end{equation*}
  Otherwise, the standing wave is said to be unstable.
\end{definition}
The main result of this paper is the following:
\begin{theorem}\label{theo:main}
  Let $-\lambda>\omega>0$. Then \eqref{eq:logNLSrep} possesses two
  positive stationary solutions, which are Gaussons,
  \begin{equation*}
  \phi_{k_\pm}(x) =
  e^{-\frac{dk_\pm}{4\lambda}}e^{-k_\pm|x|^2/2},\quad\text{where}\quad
k_{\pm}=-\lambda\pm
  \sqrt{\lambda^2-\omega^2}.
\end{equation*}
Each stationary solution generates a continuous family of solitary
waves,
\begin{equation*}
  u_{\pm,\nu}(t,x) =\phi_{k_\pm,\nu}(x)e^{i\nu t},\quad \phi_{k_\pm,\nu}(x) =
  e^{-\frac{\nu}{2\lambda}} \phi_{k_\pm}(x),\quad \nu\in \R.
\end{equation*}
Every such solitary wave is unstable in the sense of
Definition~\ref{def:stability}.\\
In the limiting case $-\lambda=\omega>0$,
$\phi_{k_-}=\phi_{k_+}=\phi_\omega=e^{d/4}e^{-\omega |x|^2/2}$ also  generates a continuous family of solitary
waves,
\begin{equation*}
  u_{\omega,\nu}(t,x) =\phi_{\omega,\nu}(x)e^{i\nu t},\quad \phi_{\omega,\nu}(x) =
  e^{\frac{\nu}{2\omega}} \phi_{\omega}(x),\quad \nu\in \R,
\end{equation*}
and every such solitary wave is unstable in the sense of
Definition~\ref{def:stability}.
\end{theorem}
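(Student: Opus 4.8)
The plan is to verify the explicit formulas by direct substitution, then establish instability by constructing an explicit unbounded family of solutions bifurcating from each Gausson. First I would plug the ansatz $u(t,x)=e^{-k|x|^2/2+c}e^{i\nu t}$ into \eqref{eq:logNLSrep}: since $\Delta e^{-k|x|^2/2}=(k^2|x|^2-kd)e^{-k|x|^2/2}$ and $\ln|u|^2=2c-k|x|^2$, matching the coefficient of $|x|^2$ forces $k^2/2=-\omega^2/2-\lambda k$, i.e. $k^2+2\lambda k+\omega^2=0$, whose roots are exactly $k_\pm=-\lambda\pm\sqrt{\lambda^2-\omega^2}$; the condition $-\lambda>\omega>0$ guarantees $k_\pm>0$, so $\phi_{k_\pm}\in\Sigma$. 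Matching the constant terms fixes $\nu$ in terms of $c$, and one reads off the normalization $c=-dk_\pm/(4\lambda)$ for the stationary ($\nu=0$) solution; shifting $c$ by $-\nu/(2\lambda)$ produces the claimed family $\phi_{k_\pm,\nu}$. The limiting case $-\lambda=\omega$ merely corresponds to the double root $k_-=k_+=\omega=-\lambda$, and the same computation applies verbatim with $c=d/4$.

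For instability, the key idea is that the logarithmic nonlinearity, combined with the repulsive potential, admits \emph{explicit Gaussian solutions} with time-dependent variance, of the form $u(t,x)=b(t)^{-d/2}e^{i\phi(t)}\phi_{k_\pm}\!\bigl(x/r(t)\bigr)e^{i\beta(t)|x|^2}$ for suitable real functions $r,b,\beta,\phi$: inserting such an ansatz reduces \eqref{eq:logNLSrep} to a system of ODEs in which the width $r(t)$ satisfies a Newton-type equation $\ddot r=\omega^2 r + (\text{terms} )/r^{\ldots}$ with an effective potential, and the repulsive term $\omega^2 r$ makes $r(t)$ escape to infinity exponentially as soon as the initial data is perturbed off the fixed point $r\equiv 1$. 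I would make this precise by taking initial data $u_0^{(\delta)}$ within $\Sigma$-distance $\delta$ of $\phi_{k_\pm,\nu}$ (e.g. a slight dilation), showing the corresponding ODE solution has $r(t)\to\infty$, and concluding that $\|x u(t)\|_{L^2}\to\infty$ while $\inf_\theta\|u(t)-e^{i\theta}\phi_{k_\pm,\nu}\|_\Sigma$ stays bounded below by a fixed constant for large $t$ — contradicting orbital stability for $\eps$ small enough. Uniqueness of the $\Sigma$-solution (quoted from \cite{CaFe-p}) ensures this explicit solution is \emph{the} solution with that data.

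The main obstacle is the second step: one must check that the Gaussian ansatz is consistent (i.e. that the logarithmic nonlinearity preserves the Gaussian form — it does, since $\ln|u|^2$ is a quadratic polynomial in $x$ for Gaussian $u$) and then analyze the resulting ODE system carefully enough to extract a quantitative lower bound on the orbital distance that holds uniformly in small $\delta$ but for a suitable sequence of times $t_n\to\infty$. A clean way is to track the dispersion $\|x u(t)\|_{L^2}^2$, which for the explicit solution is a polynomial-in-$t$ modulation of $\cosh^2(\omega t)$-type growth, so it is unbounded for \emph{every} nonzero perturbation of the width; combined with conservation of mass and energy this pins down which ODE quantity blows up. I would also remark that the same mechanism shows there is no "ground state": any reasonable variational definition (minimizing $E$ on a mass sphere, or on a Nehari-type manifold) fails because $E$ is unbounded below along these escaping Gaussians, a point the introduction already flags and which I would defer to a later section.
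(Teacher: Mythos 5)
Your verification of the explicit Gaussons and of the continuous family generated by the size invariance \eqref{eq:scaling} is correct and matches the paper. The gap is in the instability argument, and it is not merely technical: the width-perturbation mechanism you propose as the \emph{sole} source of instability genuinely fails for one of the two Gaussons. Writing a radial Gaussian solution as $b(t)e^{-a(t)|x|^2/2}$ with $a=1/\tau^2-i\dot\tau/\tau$, the width solves \eqref{eq:tau-general}, whose fixed points are $\tau_\mp=1/\sqrt{k_\pm}$; linearizing at $\tau=1/\sqrt{k}$ gives $\ddot h=\Omega_{\rm eff}h$ with $\Omega_{\rm eff}=-4k(k+\lambda)$. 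For $k=k_-$ one has $k_-+\lambda=-\sqrt{\lambda^2-\omega^2}<0$, so $\Omega_{\rm eff}>0$ and the width does escape: this is exactly how the paper proves, in Lemma~\ref{lem:unstable-lin}, the stronger statement that $\phi_{k_-}$ is unstable even under radial perturbations. But for $k=k_+$ one has $k_++\lambda=\sqrt{\lambda^2-\omega^2}>0$, hence $\Omega_{\rm eff}<0$: that fixed point is a center, the nearby trajectories of \eqref{eq:tau-general} are closed and bounded (see the phase portrait in Figure~\ref{phase} and the remark following Lemma~\ref{lem:unstable-lin}), and a small dilation of $\phi_{k_+}$ does \emph{not} produce an escaping width. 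Your claim that the dispersion ``is unbounded for every nonzero perturbation of the width'' is therefore false for $\phi_{k_+}$, and your scheme proves nothing about that Gausson. (The limiting case $-\lambda=\omega$ is also degenerate, $\Omega_{\rm eff}=0$, and needs the separate observation that $h$ then grows linearly.)

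The paper's actual proof of the theorem (Lemma~\ref{lem:unstable}) uses a different and more robust mechanism that you do not mention: the modified translation invariance \eqref{eq:translation}. The initial datum $\phi_{k_\pm}(\cdot-x_0)$, which is $\Sigma$-close to $\phi_{k_\pm}$ for $|x_0|$ small, evolves into a Gausson whose center $x_0\cosh(\omega t)$ escapes exponentially, and the elementary estimate
\begin{equation*}
\inf_{\theta\in\R}\|u(t)-e^{i\theta}\phi_{k_\pm}\|_{L^2}^2\ \ge\ \big\|\phi_{k_\pm}(\cdot-x_0\cosh(\omega t))-\phi_{k_\pm}\big\|_{L^2}^2
\end{equation*}
shows the $L^2$ distance to the orbit stays bounded below by a fixed fraction of $\|\phi_{k_\pm}\|_{L^2}$ for $t$ large. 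This works uniformly for $k_-$, $k_+$ and the limiting case $-\lambda=\omega$. To repair your proof you must either add this translation (or Galilean) perturbation, or exhibit some other class of data that destabilizes $\phi_{k_+}$; radial Gaussian width perturbations alone cannot do it.
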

We note that $\phi_{k_-}$ and $\phi_{k_+}$ are two positive solutions
to the stationary equation
\begin{equation}\label{eq:stationary}
  -\frac{1}{2}\Delta \phi -\omega^2\frac{|x|^2}{2}\phi +\lambda
  \phi\ln\(|\phi|^2\)=0.
\end{equation}
As evoked above, it is
shown in \cite{ZZ20} that \eqref{eq:logNLSrep} has at least one
positive solution, under suitable assumptions on the coefficients of
the equation. More precisely,
in \cite{ZZ20}, a semiclassical parameter $\eps$ is present,
\begin{equation*}
  -\eps^2\Delta u -|x|^2 u = u\ln|u|^2.
\end{equation*}
A stationary, positive solution exists for
sufficiently small values of the semiclassical parameter $\eps$: a
rescaling argument shows that this corresponds to
\eqref{eq:stationary} the case
$\lambda=-2$, with $\omega=\eps$: for $\eps$ small, we indeed have $-\lambda>\omega>0$. In \cite{AlvesJi2020}, it is
shown that for the logarithmic Schr\"odinger equation with a potential
admitting a global minimum reached in $\ell\ge 2$ points sufficiently
far one from another, there exist at
least $\ell$ positive stationary solutions, providing a
situation where nonuniqueness holds, which is quite different from ours.
\smallbreak

Linearizing \eqref{eq:logNLSrep} around $\phi_k$, for $k=k_-$ or $k_+$, leads to:
\begin{equation*}
  i\d_t u+\frac{1}{2}\Delta u = -\omega^2\frac{|x|^2}{2}u
  -\frac{dk}{2}u -\lambda k |x|^2 u = k^2\frac{|x|^2}{2}u-\frac{dk}{2}u .
\end{equation*}
The underlying Hamiltonian is the (shifted) harmonic oscillator,
\begin{equation*}
  H_k = -\frac{1}{2}\Delta + k^2\frac{|x|^2}{2}-\frac{dk}{2},
\end{equation*}
whose point spectrum is $k\N$. This implies \emph{linear and spectral stability} of the
stationary states $\phi_{k_\pm}$, like e.g. for the Gausson in the case of
the logarithmic KdV equation \cite{CaPe14,JaPe14,Pe17}. From this
perspective, the nonlinear instability stated in
Theorem~\ref{theo:main} can appear surprising. We actually show
several possible mechanisms leading to instability.

\smallbreak

Ground states are often characterized as the unique positive
solution to an elliptic equation (typically when the nonlinearity is
homogeneous, but not only, see e.g. \cite{Jang2010,LewinRotaNodari20}): we
discuss more into details the
notion of ground state in Section~\ref{sec:ground}, and show that
neither $\phi_{k_-}$ nor $\phi_{k_+}$ can be considered as a ground
state according to standard definitions. Note that the underlying
operator $-\Delta-\omega^2|x|^2$ is not elliptic, since its symbol is
$|\xi|^2-\omega^2|x|^2$. In particular, we do not obtain a variational
characterization of the Gaussons in the present case, unlike in the
case without potential \cite{Ar16}, or with a confining harmonic
potential \cite{ACS20,CaFe-p}. This is consistent with the fact that
these solutions are unstable. Note however that in view of the global
existence result \cite[Proposition~1.3]{CaFe-p}, the instability
mechanism is not related to finite time blow-up.
\smallbreak

The rest of this paper is organized as follows. In
Section~\ref{sec:special}, we show some special
invariances and  discuss more into details special
Gaussian solutions to \eqref{eq:logNLSrep}. In
Section~\ref{sec:instability}, we complete the proof
of Theorem~\ref{theo:main}, by showing the instability of
$\phi_{k_-}$ and $\phi_{k_+}$; several causes of instability are
exhibited. Finally in Section~\ref{sec:ground}, we discuss the notion
of ground state associated to \eqref{eq:logNLSrep}, and show that it
should be considered that \eqref{eq:logNLSrep} possesses no ground
state.

\section{Special solutions and invariances}
\label{sec:special}

\subsection{Some invariances}

\eqref{eq:logNLSrep} is invariant with respect to translation in time,
but not with respect to translation in space, due to the potential. It
is gauge invariant: if $u$ is a solution, then so is $e^{i\theta} u$
for any constant $\theta\in \R$.
\smallbreak

\subsubsection*{Size effect.} The following invariance is a feature of the logarithmic nonlinearity:
If $u$ solves \eqref{eq:logNLSrep}, then for all
$c\in \C$, so does
\begin{equation}\label{eq:scaling}
u_c(t,x):=  c\, u(t,x) e^{-it\lambda\ln|c|^2}.
\end{equation}
Typically, if we find a stationary solution, then the above transform
generates a continuum of solitary waves, indexed by $c\in (0,\infty)$, or
equivalently by
\begin{equation*}
  \nu = -\lambda\ln \(c^2\)\in \R.
\end{equation*}
Note that the size of these solitary waves is arbitrary, as $c$ ranges
$(0,\infty)$.
\smallbreak

\subsubsection*{Galilean invariance.} Due to the repulsive harmonic potential, the Galilean
invariance reads are follows: If $u(t,x)$ solves
\eqref{eq:logNLSrep}, then  for any $v\in \R^d$, so does
\begin{equation}
  \label{eq:galileo}
  u\(t,x-v\frac{\sinh(\omega t)}{\omega}\)\exp\(i\cosh(\omega t)v\cdot x-
  \frac{i|v|^2}{4\omega}\sinh(2\omega t)\).
\end{equation}
At $t=0$, the above transform is just a multiplication by
$e^{iv\cdot x}$.
\smallbreak

\subsubsection*{Space translation.} The absence of invariance with respect to translation in space can be
specified as follows: If $u$ solves \eqref{eq:logNLSrep}, then for any
$x_0\in \R^d$, so does
\begin{equation}
  \label{eq:translation}
  u\(t,x-x_0\cosh(\omega t)\) \exp\left(i\omega \sinh(\omega t)x_0\cdot x
    -\frac{i\omega|x_0|^2}{4} \sinh(2\omega t)\right).
\end{equation}
At $t=0$, the above transform corresponds to a shift in space.
\smallbreak

\subsubsection*{Tensorization.} The logarithmic nonlinearity was introduced in \cite{BiMy76} to
satisfy the following tensorization property: as the external potential
decouples space variables,
\begin{equation*}
  -\omega^2\frac{|x|^2}{2}= -\frac{\omega^2}{2}\sum_{j=1}^d x_j^2,
\end{equation*}
if the initial datum is a tensor product,
\begin{equation*}
  u_0(x) =\prod_{j=1}^d u_{0j}(x_j),
\end{equation*}
then the solution to \eqref{eq:logNLSrep} is given by
\begin{equation*}
  u(t,x) =\prod_{j=1}^d u_{j}(t,x_j),
\end{equation*}
where each $u_j$ solves a one-dimensional equation,
\begin{equation*}
   i\d_t u_j +\frac{1}{2} \d_{x_j}^2 u_j =-\omega^2\frac{x_j^2}{2}u_j+ \lambda
   \ln\(|u_j|^2\)u_j  ,\quad u_{j\mid t=0} =u_{0j} .
 \end{equation*}

\subsection{Gaussons}

As announced in the introduction, for $-\lambda>\omega>0$, the
stationary Gaussons are  given by
\begin{equation*}
  \phi_k(x) = e^{-\frac{dk}{4\lambda}}e^{-k|x|^2/2},
\end{equation*}
where $k$ is either of the solutions to
\begin{equation}\label{eq:k}
  k^2+2\lambda k +\omega^2=0, \text{ i.e. }k_{\pm}=-\lambda\pm
  \sqrt{\lambda^2-\omega^2}.
\end{equation}
If $-\lambda=\omega>0$, then $k_-=k_+=\omega$, and we will see in the
next subsection that when $\omega>-\lambda>0$, there exists no Gausson.
We compute
\begin{equation*}
  \|\phi_k\|_{L^2(\R^d)}^2 = e^{-\frac{dk}{2\lambda}}\(\frac{\pi}{k}\)^{d/2}.
\end{equation*}
  We note that as $\omega\to 0$ with $\lambda<0$ fixed, $k_-\to 0$, $k_+\to
  -2\lambda$, hence
  \begin{equation*}
    \|\phi_{k_-}\|_{L^2(\R^d)}^2\to \infty,\quad\text{whereas}\quad
    \|\phi_{k_+}\|_{L^2(\R^d)}^2\to e^d \(\frac{\pi}{-2\lambda}\)^{d/2}.
  \end{equation*}
  We have more generally
  \begin{lemma}\label{lem:compare}
    Let $-\lambda>\omega>0$. We have
    \begin{equation*}
      \|\phi_{k_-}\|_{L^2(\R^d)}> \|\phi_{k_+}\|_{L^2(\R^d)}.
    \end{equation*}
  \end{lemma}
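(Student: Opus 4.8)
The plan is to reduce the inequality $\|\phi_{k_-}\|_{L^2}>\|\phi_{k_+}\|_{L^2}$ to a scalar inequality in the single variable $k$, using the explicit formula
\[
  \|\phi_k\|_{L^2(\R^d)}^2 = e^{-\frac{dk}{2\lambda}}\(\frac{\pi}{k}\)^{d/2}
\]
established just above the statement. Since both $k_-$ and $k_+$ are positive (their product is $\omega^2>0$ and their sum is $-2\lambda>0$, by \eqref{eq:k}), I can take logarithms and work with the function
\[
  g(k) := \ln\|\phi_k\|_{L^2}^2 = -\frac{dk}{2\lambda} + \frac{d}{2}\ln\frac{\pi}{k}
  = \frac{d}{2}\left(-\frac{k}{\lambda} + \ln\pi - \ln k\right).
\]
The claim becomes $g(k_-)>g(k_+)$, i.e. (dividing by $d/2>0$) $-\tfrac{k_-}{\lambda}-\ln k_- > -\tfrac{k_+}{\lambda}-\ln k_+$. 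Recalling $\lambda<0$, write $\mu:=-\lambda>0$, so this is $\tfrac{k_-}{\mu}-\ln k_- > \tfrac{k_+}{\mu}-\ln k_+$, i.e. $h(k_-)>h(k_+)$ where $h(k) := \tfrac{k}{\mu}-\ln k$.

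The key step is then to locate $k_\pm$ relative to the critical point of $h$. We have $h'(k)=\tfrac{1}{\mu}-\tfrac{1}{k}$, which vanishes at $k=\mu=-\lambda$; moreover $h$ is decreasing on $(0,\mu)$ and increasing on $(\mu,\infty)$, with a minimum at $k=\mu$. From \eqref{eq:k}, $k_\pm = -\lambda\pm\sqrt{\lambda^2-\omega^2} = \mu\pm\sqrt{\mu^2-\omega^2}$, so $k_-<\mu<k_+$ and both lie symmetrically about $\mu$ at distance $\delta:=\sqrt{\mu^2-\omega^2}\in(0,\mu)$. The remaining task is purely one-dimensional: show that for the strictly convex function $h$ with minimum at $\mu$, the value at $\mu-\delta$ exceeds the value at $\mu+\delta$. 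Explicitly, set
\[
  F(\delta) := h(\mu-\delta) - h(\mu+\delta) = \frac{(\mu-\delta)-(\mu+\delta)}{\mu} - \ln(\mu-\delta) + \ln(\mu+\delta)
  = -\frac{2\delta}{\mu} + \ln\frac{\mu+\delta}{\mu-\delta}.
\]
I need $F(\delta)>0$ for $\delta\in(0,\mu)$. Since $F(0)=0$, it suffices to check $F'(\delta)>0$: computing, $F'(\delta) = -\tfrac{2}{\mu} + \tfrac{1}{\mu+\delta}+\tfrac{1}{\mu-\delta} = -\tfrac{2}{\mu} + \tfrac{2\mu}{\mu^2-\delta^2} = 2\mu\left(\tfrac{1}{\mu^2-\delta^2}-\tfrac{1}{\mu^2}\right)$, which is strictly positive for $\delta\in(0,\mu)$ because $\mu^2-\delta^2<\mu^2$. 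Hence $F$ is strictly increasing on $[0,\mu)$, so $F(\delta)>0$ there, which gives $h(k_-)>h(k_+)$ and therefore $\|\phi_{k_-}\|_{L^2}>\|\phi_{k_+}\|_{L^2}$.

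I do not anticipate a serious obstacle: the only point requiring a little care is the bookkeeping of signs (the hypothesis $\lambda<0$, so $-1/\lambda>0$, and $-\lambda>\omega$ ensuring $\delta=\sqrt{\lambda^2-\omega^2}$ is real and lies strictly between $0$ and $\mu=-\lambda$). Equivalently, one can bypass the substitution $\delta=\sqrt{\mu^2-\omega^2}$ altogether and argue directly: $k_+ k_- = \omega^2$ and $k_++k_- = 2\mu$ by \eqref{eq:k}, so $\ln k_+ + \ln k_- = \ln(\omega^2)$ is fixed while $\ln\frac{k_+}{k_-}>0$; then $h(k_-)-h(k_+) = \frac{k_--k_+}{\mu} + \ln\frac{k_+}{k_-}$, and using $k_--k_+ = -2\delta$ together with the elementary inequality $\ln\frac{k_+}{k_-} > \frac{k_+-k_-}{\sqrt{k_+k_-}}\cdot\frac{\sqrt{k_+k_-}}{\mu}$ — more simply $\ln\frac{k_+}{k_-} > \frac{k_+-k_-}{\mu}$, which follows because $\mu$ is the arithmetic mean of $k_\pm$ and $\ln$ is concave (so its average slope over $[k_-,k_+]$ exceeds its slope $1/\mu$ at the midpoint) — one concludes immediately. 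Either route reduces the lemma to a one-line convexity estimate.
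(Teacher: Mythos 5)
Your proof is correct and follows essentially the same route as the paper: both reduce the claim, via the explicit formula for $\|\phi_k\|_{L^2}^2$, to a one-variable inequality in $\delta=\sqrt{\lambda^2-\omega^2}$, and your $F(\delta)>0$ is exactly the logarithm of the paper's inequality $1+\theta>(1-\theta)e^{2\theta}$ (with $\theta=\delta/|\lambda|$), established by the same ``vanishes at $0$, increasing derivative'' argument. Working in logarithmic coordinates lets you conclude with one derivative where the paper takes two; the only (harmless) imprecision is in your optional closing remark, where $\ln(k_+/k_-)>(k_+-k_-)/\mu$ really follows from the strict convexity of $t\mapsto 1/t$ (Hermite--Hadamard), not merely from the concavity of $\ln$.
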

  \begin{proof}
    It suffices to prove that
    \begin{align*}
      \frac{e^{-k_-/\lambda}}{k_-}>
      \frac{e^{-k_+/\lambda}}{k_+}&\Longleftrightarrow
      e^{(k_+-k_-)/\lambda}>\frac{k_-}{k_+}\\
      &\Longleftrightarrow
      e^{2\sqrt{\lambda^2-\omega^2}/\lambda}>\frac{-\lambda
        -\sqrt{\lambda^2-\omega^2}}{-\lambda
        +\sqrt{\lambda^2-\omega^2}} .
    \end{align*}
 We view the above inequality as depending on the unknown $\omega\in
(0,-\lambda)$, and change the unknown as $\theta=
\sqrt{\lambda^2-\omega^2}/|\lambda|\in (0,1)$, so the above inequality
becomes
\begin{equation*}
  e^{-2\theta}>\frac{1-\theta}{1+\theta}\Longleftrightarrow
  1+\theta>(1-\theta)e^{2\theta}.
\end{equation*}
The map $f(\theta) = 1+\theta-(1-\theta)e^{2\theta}$, defined for
$\theta\in (0,1)$, satisfies
\begin{equation*}
  f''(\theta) = 4 e^{2\theta}-4(1-\theta)e^{2\theta}>0, \text{ hence }
  f'(\theta) = 1+e^{2\theta} -2(1-\theta)e^{2\theta}> 0,
\end{equation*}
and $f(\theta)>0$ for all $0<\theta<1$.
  \end{proof}
  \bigbreak

In view of \eqref{eq:scaling}, with $\nu = -\lambda \ln (c^2)$, $c>0$, we have a continuum of standing waves:
\begin{equation*}
  u_{\pm,\nu}(t,x) =\phi_{k_\pm,\nu}(x)e^{i\nu t},\quad \phi_{k_\pm,\nu}(x) =
  e^{-\frac{\nu}{2\lambda}} \phi_{k_\pm}(x),\quad \nu\in \R.
\end{equation*}
Therefore, to understand the dynamical properties of $u_{\pm,\nu}$
(orbital stability or instability), it is enough to consider the
stationary solutions $\phi_{k_\pm}$.

\subsection{Gaussian solutions}

By Gaussian solutions, we mean solutions which are Gaussian in the
space variable, with time-dependent coefficients. We adapt the computations presented in \cite{CaGa18} in the case $\omega=0$.
Suppose $d=1$ (for $d\ge 2$, we may invoke the above tensorization
property).
We seek $u(t,x) =
b(t)e^{-a(t)x^2/2}$ (in particular $u_0$ is Gaussian). We
find:
\begin{equation*}
  i\dot b = \frac{1}{2}ab+\lambda b\ln|b|^2;\quad i\dot a
  =a^2+2\lambda\RE a+\omega^2.
\end{equation*}
The function $b$ is given explicitly in terms of $a$ and its initial
value $b_0$,
\begin{equation*}
  b(t) =b_0 \exp\( -i\lambda t\ln\(|b_0|^2\)-\frac{i}{2}A(t)-i\lambda
  \IM\int_0^t A(s)ds\),
\end{equation*}
where we have denoted
$\displaystyle  A(t)=\int_0^t a(s)ds$.
We may write $a$ under the form
\begin{equation}\label{eqa}
  a=\frac{1}{\tau^2} -i\frac{\dot \tau}{\tau},\quad \tau\in\mathbb{R},
\end{equation}
and the equation for $a$ leads to
\begin{equation}\label{eq:tau-general}
  \ddot \tau = \frac{2\lambda}{\tau}+\frac{1}{\tau^3}+\omega^2 \tau.
\end{equation}
We note that the form \eqref{eqa} implies that $b(t)$ can be written
as
\begin{equation}\label{eqb}
  b(t) =b_0 e^{i\theta(t)}\sqrt{\frac{\tau(0)}{\tau(t)}},\quad
  \theta(t)\in \R.
\end{equation}
Multiplying \eqref{eq:tau-general} by $\dot{\tau}$ and integrating, we
get
\begin{equation}\label{eq:tau-energy}
  (\dot \tau)^2 =C_0+ 4\lambda \ln|\tau| -\frac{1}{\tau^2}+\omega^2 \tau^2,
\end{equation}
where $C_0=\dot{\tau}(0)^2-4\lambda \ln|\tau(0)|+\frac{1}{\tau(0)^2}-\omega^2\tau(0)^2$ is related to the  initial data. Noticing that $F(q)=C_0+ 4\lambda \ln q -\frac{1}{q^2}+\omega^2 q^2\to -\infty$ when $q\to 0$, this readily shows that $\tau$
remains bounded away from zero, and thus may be supposed positive in
view of \eqref{eqa}:
\begin{equation*}
  \exists \delta>0,\quad \tau(t)\ge\delta,\quad\forall t\ge 0.
\end{equation*}
\begin{proposition}
  Let $d=1$, $\lambda<0<\omega$. \\
 $1.$ If $-\lambda>\omega>0$, then \eqref{eq:tau-general} has exactly two
 stationary solutions,
$\tau_\mp=1/\sqrt{k_\pm}$.
The other solutions are either periodic, or unbounded, corresponding to
time-periodic and dispersive Gaussian solutions to
\eqref{eq:logNLSrep}, respectively.\\
$2.$ If $-\lambda=\omega>0$, then \eqref{eq:tau-general} has exactly
one stationary solution, $\tau_0= 1/\sqrt\omega$. All the
  other solutions are unbounded. In other words, any Gaussian solution
 to  \eqref{eq:logNLSrep}  which is not of the form
  \begin{equation*}
    e^{(2\nu+\omega)/(4\omega)}e^{i\nu t} e^{-\omega x^2/2},\quad \nu
    \in \R,
  \end{equation*}
  is dispersive.\\
  $3.$ If $\omega>-\lambda>0$, then every solution to
  \eqref{eq:tau-general} is unbounded. More precisely,
  \begin{equation*}
     e^{\omega t}\lesssim \tau(t)\lesssim
   e^{\omega t}, \quad t \ge 0,
  \end{equation*}
  and every Gaussian solution to
  \eqref{eq:logNLSrep} disperses exponentially fast.
\end{proposition}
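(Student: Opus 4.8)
The plan is to treat the second-order ODE \eqref{eq:tau-general} as a one-dimensional conservative mechanical system and to read off the three cases from the shape of the associated potential. Write $\ddot\tau = -V'(\tau)$ with
\[
  V(q) = -2\lambda\ln q + \frac{1}{2q^2} - \frac{\omega^2}{2}q^2, \qquad q\in(0,\infty),
\]
so that the first integral \eqref{eq:tau-energy} is precisely $(\dot\tau)^2 = C_0 - 2V(\tau)$: the trajectory lives in the sublevel set $\{V\le C_0/2\}$ and $\tau$ oscillates between, or escapes past, consecutive zeros of $q\mapsto C_0/2 - V(q)$. We already know that $V(q)\to+\infty$ as $q\to0^+$ (this is the already-recorded property of $F=C_0-2V$, namely $F(q)=C_0+4\lambda\ln q-q^{-2}+\omega^2q^2\to-\infty$) and $V(q)\to-\infty$ as $q\to+\infty$; moreover $\tau$ stays bounded away from $0$, and by the energy identity $\tau$ cannot reach $0$ or $+\infty$ in finite time, so every solution is global in forward time. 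Hence the classification is entirely dictated by the critical points of $V$.

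First I would locate these. Since $V'(q) = -q^{-3}\big(\omega^2q^4 + 2\lambda q^2 + 1\big)$, the critical points correspond to the positive roots $s=q^2$ of $\omega^2 s^2 + 2\lambda s + 1 = 0$; using that $k_\pm$ are the roots of $k^2+2\lambda k+\omega^2=0$, so $k_+k_-=\omega^2$ and $k_++k_-=-2\lambda$, a one-line computation gives that these roots are $s=1/k_\mp$, i.e. the critical points of $V$ are $\tau_\mp = 1/\sqrt{k_\pm}$ whenever $k_\pm$ are real. Because $\omega^2q^4+2\lambda q^2+1$ is positive near $q=0$ and near $q=+\infty$, the monotonicity of $V$ follows immediately. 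If $-\lambda>\omega$ the discriminant $\lambda^2-\omega^2$ is positive and $0<\tau_-<\tau_+$, so $V$ is strictly decreasing on $(0,\tau_-)$, strictly increasing on $(\tau_-,\tau_+)$, strictly decreasing on $(\tau_+,\infty)$; hence $\tau_-=1/\sqrt{k_+}$ is a nondegenerate local minimum and $\tau_+=1/\sqrt{k_-}$ a nondegenerate local maximum of $V$. If $-\lambda=\omega$ then $\omega^2q^4+2\lambda q^2+1=(\omega q^2-1)^2$, so $V'\le0$ with equality only at $\tau_0=1/\sqrt\omega$: $V$ is strictly decreasing on $(0,\infty)$ with a single (degenerate) critical point. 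If $\omega>-\lambda$ the quartic has no real root, so $V'<0$ and $V$ is strictly decreasing on $(0,\infty)$.

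Next I would translate this into dynamics. The stationary solutions of \eqref{eq:tau-general} are exactly the critical points of $V$, which gives two, one, and zero of them in the three respective regimes; and specializing \eqref{eqa}--\eqref{eqb} to $\tau\equiv1/\sqrt\omega$, $\dot\tau\equiv0$, together with the parametrization \eqref{eq:scaling}, identifies the stationary solution in the second regime with the announced family $e^{(2\nu+\omega)/(4\omega)}e^{i\nu t}e^{-\omega x^2/2}$. For a nonstationary trajectory, its behavior is dictated by the connected component of $\{V\le C_0/2\}$ containing $\tau(0)$. In the first regime: when $C_0/2\in(V(\tau_-),V(\tau_+))$ this set has one bounded component $[\alpha,\beta]$ around $\tau_-$, with $V'(\alpha)\ne0$, $V'(\beta)\ne0$, so the trajectory is periodic (period $2\int_\alpha^\beta(C_0-2V)^{-1/2}\,dq<\infty$), and one unbounded component on which $\ddot\tau=-V'>0$ past the last turning point and $\tau\to+\infty$; when $C_0/2\ge V(\tau_+)$ the set is a single unbounded interval and $\tau\to+\infty$. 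The only subtlety is the borderline energy $C_0/2=V(\tau_+)$: besides the equilibrium it carries a trajectory homoclinic to $\tau_+=1/\sqrt{k_-}$ (a bounded, non-periodic, non-dispersive Gaussian solution), a one-parameter exception to the stated dichotomy that I would flag explicitly. In the second and third regimes $V$ is strictly decreasing, so every sublevel set is an interval $(0,\infty)$ or $[\alpha,\infty)$, never bounded, hence there is no periodic orbit: every solution other than the equilibrium (in regime two) is monotone past its unique turning point and satisfies $\tau(t)\to+\infty$. In all escaping cases the energy identity gives $\dot\tau/\tau\to\omega$, so the modulus $|b(t)|=|b_0|\sqrt{\tau(0)/\tau(t)}$ of \eqref{eqb} tends to $0$ and the Gaussian solution disperses.

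Finally, the place where real care is needed is the quantitative rate $e^{\omega t}\lesssim\tau(t)\lesssim e^{\omega t}$ in the third regime. The upper bound is a Gronwall argument: for $\tau\ge\delta$ the term $4\lambda\ln\tau$ in $(\dot\tau)^2=C_0+4\lambda\ln\tau-\tau^{-2}+\omega^2\tau^2$ is bounded above (as $\lambda<0$), so $(\dot\tau)^2\le\omega^2\tau^2+C$, whence $\dot\tau\le\omega\tau+C'$ and $\tau(t)\lesssim e^{\omega t}$. The lower bound is more delicate, since the crude estimate $(\dot\tau)^2\ge\tfrac14\omega^2\tau^2$ (valid once $\tau$ is large, which happens because $\ddot\tau>0$ forces $\dot\tau>0$ eventually) only yields $\tau(t)\gtrsim e^{\omega t/2}$. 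To recover the sharp exponent I would bootstrap with $y=\ln\tau$: the energy identity gives $\dot y=\big(\omega^2+(4\lambda y+C_0-e^{-2y})e^{-2y}\big)^{1/2}=\omega+O(\tau^{-2}\ln\tau)$, and since the crude bound already forces $\tau\gtrsim e^{\omega t/2}$, the error term is integrable in $t$; hence $y(t)-\omega t$ has a finite limit, i.e. $\tau(t)\asymp e^{\omega t}$, which closes both inequalities. This refined asymptotics, rather than the routine sign bookkeeping for $V$, is the main obstacle; the homoclinic orbit in regime one is the other point I would be careful to state precisely.
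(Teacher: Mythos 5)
Your proof is correct in substance but takes a genuinely different route from the paper's. The paper rewrites the right-hand side of \eqref{eq:tau-general} as $P(1/\tau^2)\tau$ with $P(X)=X^2+2\lambda X+\omega^2$, and then argues by convexity (cases 2 and 3) and by pointing at the phase portrait of Figure~\ref{phase} (case 1); for the lower bound $\tau\gtrsim e^{\omega t}$ in case 3 it compares $\tau$ with the explicit solution of $\ddot\theta=\omega^2\theta-\eps$. You instead set up the Newtonian potential $V(q)=-2\lambda\ln q+\tfrac{1}{2}q^{-2}-\tfrac{\omega^2}{2}q^2$, locate and classify its critical points (which correctly reproduce $\tau_\mp=1/\sqrt{k_\pm}$, with $\tau_-$ a minimum and $\tau_+$ a maximum of $V$), and read the dynamics off the sublevel sets $\{V\le C_0/2\}$; for the sharp rate you bootstrap through $y=\ln\tau$ after a crude $e^{\omega t/2}$ bound. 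Both approaches are sound; yours is more systematic in case 1 (where the paper essentially defers to the figure) and actually yields the stronger conclusion $\tau(t)e^{-\omega t}\to c>0$, while the paper's comparison argument is more self-contained for the lower bound. Your observation that the dichotomy ``periodic or unbounded'' in part 1 misses the separatrix --- the orbit homoclinic to $\tau_+=1/\sqrt{k_-}$ at the critical energy $C_0/2=V(\tau_+)$, which is bounded, non-periodic and non-dispersive --- is a genuine and worthwhile catch: the paper's proof glosses over exactly this level set.

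The one gap in your own argument is that the same critical-energy loophole occurs in regime 2, where you assert that every non-equilibrium solution ``is monotone past its unique turning point and satisfies $\tau(t)\to+\infty$.'' At the energy level $C_0/2=V(\tau_0)$ with $\tau_0=1/\sqrt\omega$, the sublevel set is $[\tau_0,\infty)$ with $V=C_0/2$ attained only at the degenerate critical point $\tau_0$; the trajectory starting at $\tau(0)>\tau_0$ with $\dot\tau(0)=-\sqrt{2(V(\tau_0)-V(\tau(0)))}$ has no turning point, decreases for all $t\ge0$, and converges to $\tau_0$ in infinite time (since $V(\tau_0)-V(q)\sim c(q-\tau_0)^3$, the integral $\int(q-\tau_0)^{-3/2}dq$ diverges). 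This is a bounded, non-stationary, non-dispersive solution on $[0,\infty)$, so the claim ``all the other solutions are unbounded'' needs the same caveat you supplied in regime 1. To be fair, the paper's own proof of case 2 has exactly this gap: the argument that $\dot\tau(t_0)\neq0$ whenever $\tau(t_0)=1/\sqrt\omega$ rules out touching the equilibrium, not converging to it asymptotically, and ``positive and convex'' does not force a decreasing convex function to blow up. So this is less a defect of your proof relative to the paper's than a point where both should be stated more carefully.
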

\begin{proof}
  We remark that the righthand side of \eqref{eq:tau-general} can be
  rewritten as
  \begin{equation*}
    \ddot \tau=P\(\frac{1}{\tau^2}\)\tau,\quad P(X)= X^2+2\lambda X+\omega^2.
  \end{equation*}
 When $-\lambda>\omega>0$, $P$ has exactly two roots, $k_-$ and
 $k_+$, so
 \begin{equation*}
   \ddot \tau = \(\frac{1}{\tau^2}-k_-\)\(\frac{1}{\tau^2}-k_+\)\tau.
 \end{equation*}
According to the initial data for $\tau$, the value of the constant
$C_0$ in \eqref{eq:tau-energy} varies,  leading to bounded
trajectories, in which case $\tau$ is periodic, or to unbounded
trajectories, in which case $\tau(t)\to \infty$ as $t$ goes to
infinity. This is illustrated by Figure~\ref{phase}, displaying the
phase portrait for the equation \eqref{eq:tau-general} with $\omega=1$
and $\lambda=-2$, where we find
\begin{equation*}
  \tau_-=\frac{1}{\sqrt{2+\sqrt{3}}}\approx 0.518,\quad
  \tau_+=\frac{1}{\sqrt{2-\sqrt{3}}}\approx  1.932.
\end{equation*}

\begin{figure}[htbp]
\begin{center}
\includegraphics[width=4in,height=2.5in]{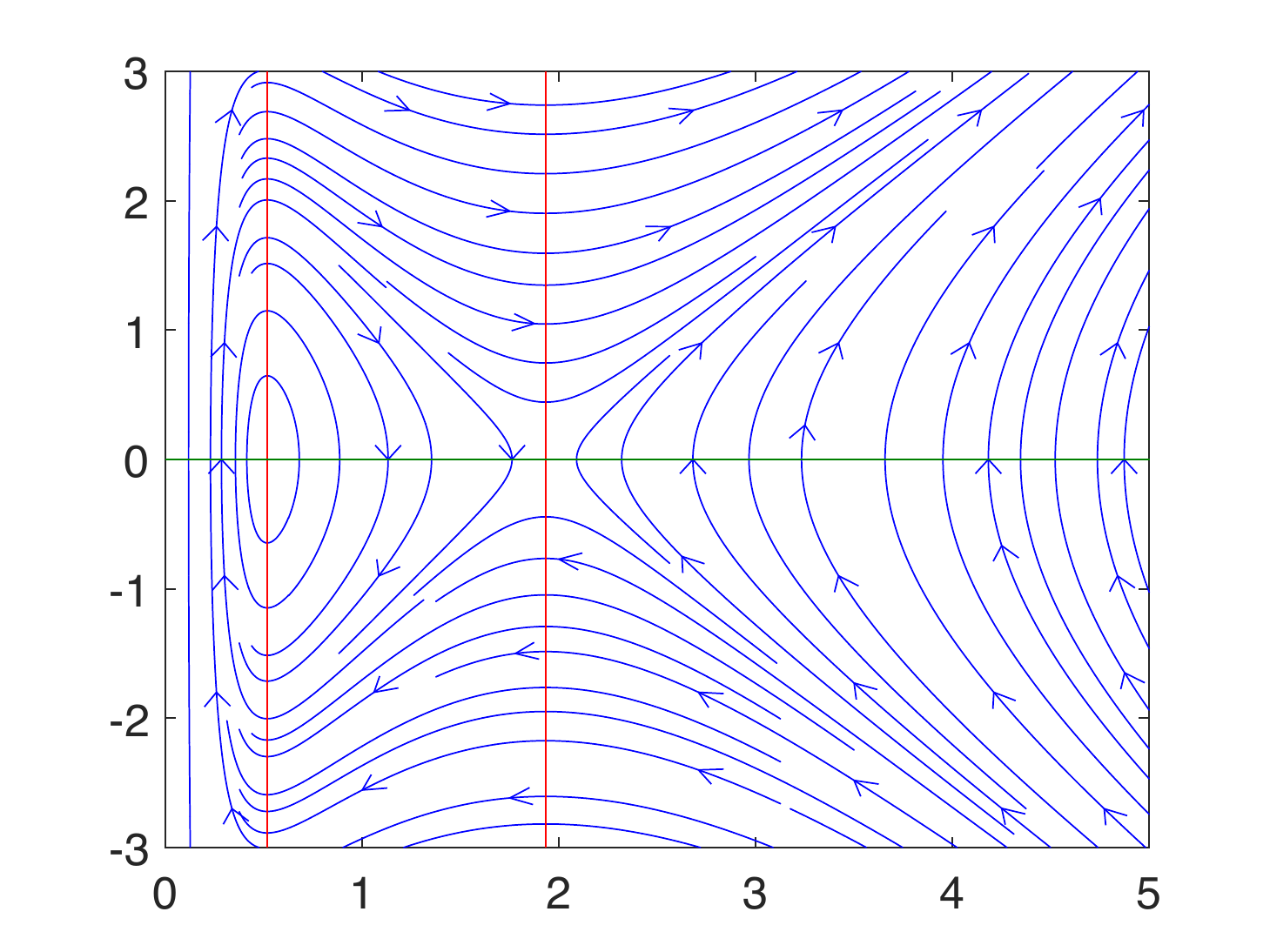}
\end{center}
\caption{Phase portraits for the ODE \eqref{eq:tau-general} with $\omega=1$ and $\lambda=-2$.}
\label{phase}
\end{figure}
When $-\lambda=\omega>0$, $P$ has exactly one double root $\omega$, and
\begin{equation*}
   \ddot \tau = \(\frac{1}{\tau^2}-\omega\)^2\tau.
 \end{equation*}
If $\tau$ is not constant (equal to $1/\sqrt\omega$), then $\tau$ is
strictly convex. If $\tau(t_0)=1/\sqrt\omega$ for some $t_0\ge 0$,
then $\dot \tau(t_0)\not =0$, for otherwise $\tau$ would be constant,
by uniqueness for \eqref{eq:tau-general}: $\tau$ can't remain close
to $1/\sqrt\omega$, and assuming that $\tau$ is bounded leads to a
contradiction. As $\tau$ is positive and convex, $\tau(t)$ goes to
infinity as $t\to \infty$. This is illustrated in Figure~\ref{phase2}.
\smallbreak

\begin{figure}[htbp]
\begin{center}
\includegraphics[width=4in,height=2.5in]{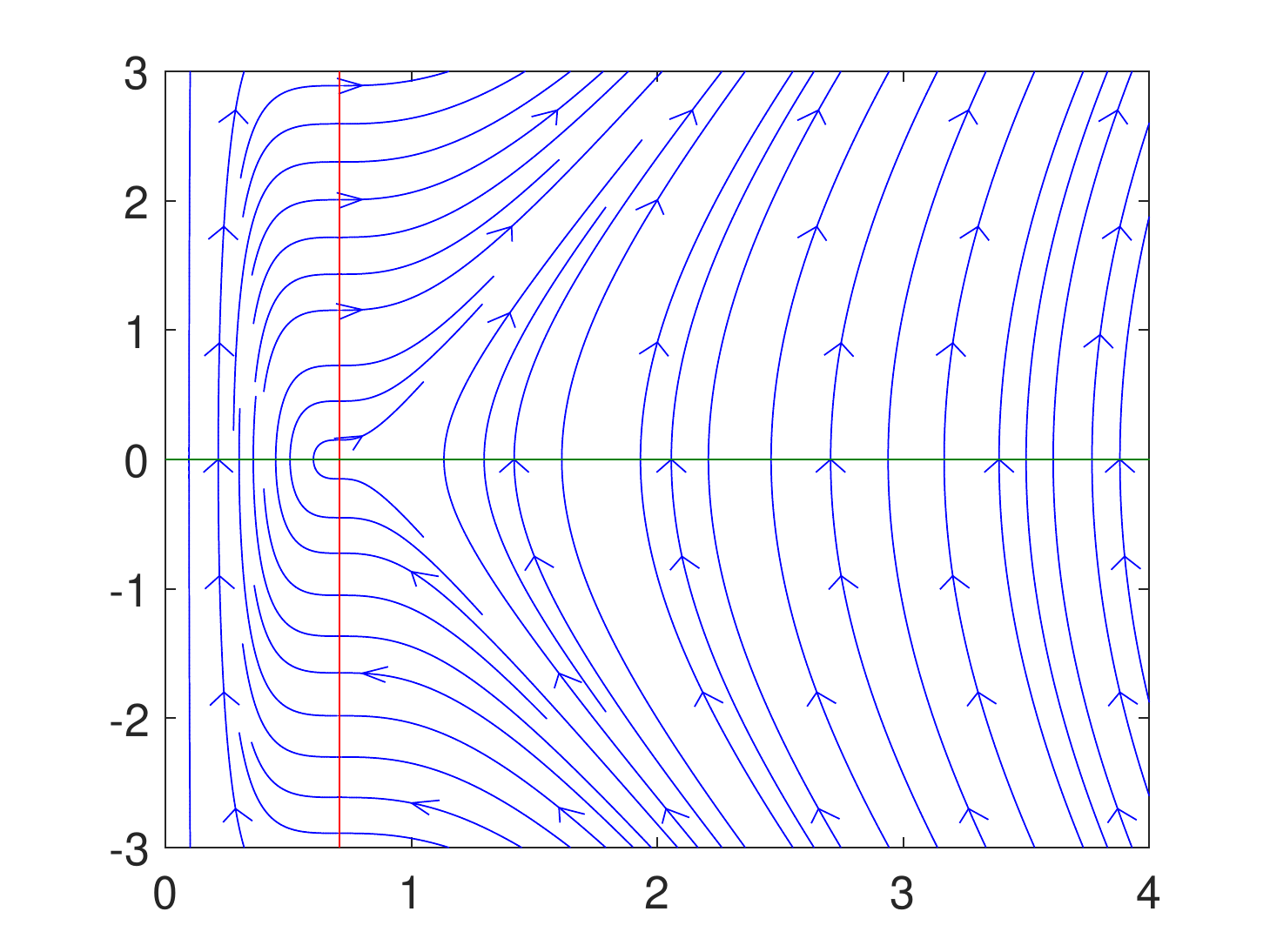}
\end{center}
\caption{Phase portraits for the ODE \eqref{eq:tau-general} with $\omega=2$ and $\lambda=-2$.}
\label{phase2}
\end{figure}

When $ \omega>-\lambda>0$, $P$  is uniformly bounded from below on
$\R$, $P(X)\ge \delta>0$. If $\tau$ was bounded, \eqref{eq:tau-general}
would yield $\ddot \tau\gtrsim 1$, since $\tau$ is bounded away from zero, hence a contradiction. As $\tau$ is
convex, $\tau(t)$ goes to
infinity as $t\to \infty$, see Figure~\ref{phase3}.
\begin{figure}[htbp]
\begin{center}
\includegraphics[width=4in,height=2.5in]{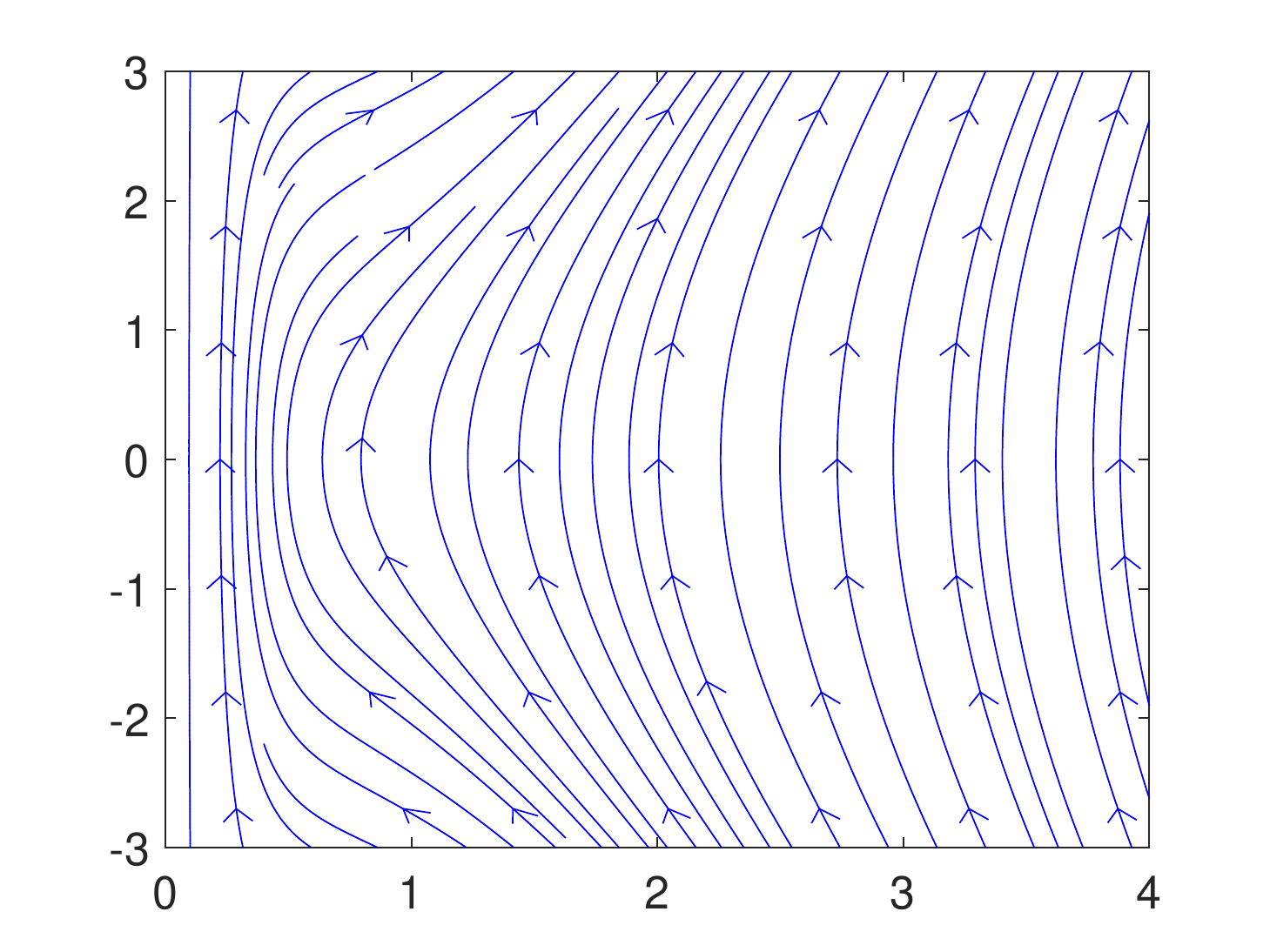}
\end{center}
\caption{Phase portraits for the ODE \eqref{eq:tau-general} with $\omega=2$ and $\lambda=-1$.}
\label{phase3}
\end{figure}
As a consequence,
for any $\eps>0$, picking $T$ sufficiently large,
\begin{equation*}
\ddot \tau(t) \ge \omega^2\tau(t)-\eps, \quad
\forall t\ge T.
\end{equation*}
The solution to
\begin{equation*}
 \ddot \theta(t)= \omega^2\theta(t)-\eps, \quad
 \theta(T)=\tau(T),\quad \dot\theta(T)=\dot\tau(T),
\end{equation*}
is given by
\begin{equation*}
 \theta(t) =\tau(T)\cosh\(\omega(t-T)\) +
 \dot\tau(T)\frac{\sinh\(\omega(t-T)\) }{\omega} -\frac{2\eps}{\omega^2}
            \sinh^2\(\frac{\omega}{2}(t-T)\).
\end{equation*}
As $\tau(T)$ and $\dot\tau(T)$ go to infinity as $T\to\infty$, we
infer that $\tau(t) \gtrsim e^{\omega t}$. The converse
estimate is a direct consequence of \eqref{eq:tau-energy}, again
because for $t$ sufficiently large, $\ln\tau(t)>0$, and $\lambda<0$.
\end{proof}

\section{Orbital instability}
\label{sec:instability}

The instability result that we prove is slightly stronger than
instability in the sense of Definition~\ref{def:stability}:
\begin{lemma}\label{lem:unstable}
  Let $\nu\in \R$.\\
$1.$ Suppose $-\lambda>\omega>0$.
  The solitary waves $\phi_{k_-,\nu}(x)e^{i\nu t}$
  and $\phi_{k_+,\nu}(x)e^{i\nu t}$
  are unstable. More precisely, for any $\eta>0$, there exists $u_0\in
  \Sigma$ such that
  \begin{equation*}
    \|u_0-\phi_{k_+,\nu}\|_{\Sigma}<\eta,
  \end{equation*}
 and the solution to \eqref{eq:logNLSrep} such that $u_{\mid t=0}=u_0$
 satisfies
 \begin{equation*}
   \sup_{t\ge 0} \inf_{\theta\in \R}\left\| u(t) -
     e^{i\theta}\phi_{k_+,\nu}\right\|_{L^2(\R^d)} \ge
   \frac{1}{2}\|\phi_{k_+,\nu}\|_{L^2(\R^d)} .
 \end{equation*}
 The same holds when $k_+$ is replaced by $k_-$.\\
 $2.$ Suppose $-\lambda=\omega>0$. The solitary wave
 $\phi_{\omega,\nu}(x)e^{i\nu t}$ is unstable in the same sense as
 above.
\end{lemma}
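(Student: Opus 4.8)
The plan is to exploit the explicit Gaussian solutions constructed in the previous section: since the stationary Gausson $\phi_k$ corresponds to the fixed point $\tau \equiv 1/\sqrt{k}$ of the ODE \eqref{eq:tau-general}, and since (when $-\lambda>\omega>0$) this fixed point is surrounded by \emph{periodic} orbits whose amplitude we can make arbitrarily small, I would build the destabilizing initial data directly from these periodic Gaussian solutions. Concretely, fix $k=k_+$ (the argument for $k_-$ being identical, and the case $-\lambda=\omega$ handled separately below). Using the size-effect invariance \eqref{eq:scaling} we may reduce to $\nu=0$, so it suffices to destabilize $\phi_{k_+}$ itself. For $\eta>0$ small, pick $\tau_\eta$ a nonconstant periodic solution of \eqref{eq:tau-general} with $\tau_\eta(0)$ close to $1/\sqrt{k_+}$ and $\dot\tau_\eta(0)=0$; via \eqref{eqa}–\eqref{eqb} this produces a Gaussian solution $u_\eta(t,x)=b_\eta(t)e^{-a_\eta(t)x^2/2}$ of \eqref{eq:logNLSrep} whose initial datum $u_{\eta,0}$ is within $\eta$ of $\phi_{k_+}$ in $\Sigma$ (continuity of the Gaussian profile in its parameters). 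In dimension $d\ge 2$ one tensorizes this one-dimensional construction.

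The second step is to show that although $u_{\eta,0}$ is $\eta$-close to $\phi_{k_+}$, the orbit $\{e^{i\theta}\phi_{k_+}:\theta\in\R\}$ is left by a fixed distance. Here the key point is that along a periodic, nonconstant trajectory $\tau_\eta$, the quantity $1/\tau_\eta(t)^2=\RE a_\eta(t)$ oscillates and in particular takes a value, say at some time $t_\eta$, that is bounded away from $k_+$ by an amount independent of how small $\eta$ is chosen — this is because the period of the linearized oscillator around $1/\sqrt{k_+}$ is fixed, but more robustly because on a closed orbit the minimum of $1/\tau^2$ is strictly less than $k_+$ by an amount controlled below by the orbit's energy level; one must verify that one cannot have arbitrarily small closed orbits on which $1/\tau^2$ stays arbitrarily close to $k_+$, which follows from the nondegeneracy $P'(k_+)=k_+-k_-\ne 0$ of the equilibrium (the linearization $\ddot y = -k_+P'(k_+)\cdot(\text{coefficient})\,y$ is a genuine harmonic oscillator, forcing the orbit to sweep an interval of $\RE a$ of size comparable to its amplitude in $\dot\tau$, \emph{not} smaller). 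Then at time $t_\eta$ the spread $1/\RE a_\eta(t_\eta)$ of the Gaussian $|u_\eta(t_\eta,\cdot)|$ differs from that of $\phi_{k_+}$; since $|u_\eta(t_\eta)|$ and $|\phi_{k_+}|$ are both Gaussians with the same mass (mass is conserved and the transform \eqref{eq:scaling} with $c=1$ is mass preserving — note $|b_\eta(t)|^2/\RE a_\eta(t)$ is constant by \eqref{eqb}) but different variances, an elementary computation of $\big\||u_\eta(t_\eta)|-|\phi_{k_+}|\big\|_{L^2}$ gives a lower bound of the form $c\,\|\phi_{k_+}\|_{L^2}$ with $c$ absolute. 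Combined with $\big|\,\|u(t)-e^{i\theta}\phi\|_{L^2}\ge \big\||u(t)|-|\phi|\big\|_{L^2}$, taking $\eta$ small enough that the output constant beats $1/2$ yields the claimed $\tfrac12\|\phi_{k_+,\nu}\|_{L^2}$ bound.

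For the limiting case $-\lambda=\omega>0$ (part 2), the mechanism is even cleaner: by the Proposition, the equilibrium $\tau_0=1/\sqrt\omega$ of $\ddot\tau=(1/\tau^2-\omega)^2\tau$ is \emph{not} surrounded by periodic orbits — every nonconstant solution is unbounded with $\tau(t)\to\infty$. So for $\eta>0$ I take $\tau_\eta(0)=1/\sqrt\omega$, $\dot\tau_\eta(0)=\eta$; the corresponding Gaussian solution starts within $O(\eta)$ of $\phi_\omega$ in $\Sigma$, but $\tau_\eta(t)\to\infty$, hence $\RE a_\eta(t)=1/\tau_\eta(t)^2\to 0$, so $|u_\eta(t)|$ is a mass-fixed Gaussian that spreads to infinity while $|\phi_\omega|$ stays concentrated; again $\big\||u_\eta(t)|-|\phi_\omega|\big\|_{L^2}\to \|\phi_\omega\|_{L^2}+\|\phi_\omega\|_{L^2}\cdot o(1)$-type estimates (in fact it tends to $\sqrt2\,\|\phi_\omega\|_{L^2}$ by orthogonality in the limit) give, for $t$ large, a distance exceeding $\tfrac12\|\phi_{\omega,\nu}\|_{L^2}$. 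After reducing to $\nu=0$ by \eqref{eq:scaling} this finishes the proof.

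I expect the main obstacle to be the quantitative claim in the second step for part 1: ruling out that there are arbitrarily small closed orbits of \eqref{eq:tau-general} on which $\RE a=1/\tau^2$ departs from $k_\pm$ only by an amount $o(\eta)$. The clean way around it is to avoid uniformity altogether — simply note that a single \emph{fixed} nonconstant periodic orbit already produces a solution that is a fixed (not small) distance from the stationary orbit for all time, and then use the size-effect scaling \eqref{eq:scaling} only at the end; but Definition~\ref{def:stability} asks for closeness of the \emph{initial} data, so we do need a family of orbits shrinking to the equilibrium, and hence we do need that along such shrinking orbits the $L^2$-distance of the profiles does not also shrink. This is where the nondegeneracy of the harmonic-oscillator linearization at $k_\pm$ (eigenfrequency $\sqrt{2\sqrt{\lambda^2-\omega^2}\cdot k_\pm}\ne 0$, equivalently $P'(k_\pm)\ne 0$) is essential and must be used carefully: it guarantees that on the orbit of amplitude $\sim\eta$ in phase space, $1/\tau^2$ varies by $\sim\eta$ as well, so the profile mismatch is $\sim\eta$ in $\Sigma$ but — crucially — the \emph{relative} variance mismatch is $\sim\eta$, which is \emph{not} what we want. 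So in fact the correct statement to prove for part 1 is weaker than an $O(1)$ lower bound for \emph{every} small $\eta$: one shows instability by noting that the solution leaves any sufficiently small $\Sigma$-neighborhood — i.e., for $\eps_0$ fixed small, no $\eta$ works — which still matches Definition~\ref{def:stability}; the specific $\tfrac12\|\phi\|_{L^2}$ lower bound in Lemma~\ref{lem:unstable} must then come from a different, larger-amplitude perturbation, or from a separate mechanism (e.g. a dispersive Gaussian solution started $\eta$-close, whose existence is also guaranteed by the Proposition when $C_0$ is on the unbounded side). Sorting out which perturbation gives the strong $L^2$ statement, versus the weak $\Sigma$ statement, is the delicate bookkeeping I would expect to occupy the bulk of the write-up.
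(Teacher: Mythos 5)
There is a genuine gap, and you have in fact diagnosed it yourself without repairing it. Your mechanism perturbs the \emph{width} of the Gausson, i.e.\ you move along the phase portrait of \eqref{eq:tau-general}. For $k_-$ this works (the equilibrium $\tau_+=1/\sqrt{k_-}$ is a saddle, $\Omega_{\rm eff}=-4k_-(k_-+\lambda)>0$), and this is precisely how the paper proves the \emph{radial} instability in Lemma~\ref{lem:unstable-lin}. But for $k_+$ the equilibrium $\tau_-=1/\sqrt{k_+}$ is a nondegenerate \emph{center}: the conserved quantity \eqref{eq:tau-energy} has a strict local minimum there, so every orbit starting $\eta$-close stays in an $O(\eta)$-neighbourhood for all time and is periodic. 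Consequently no centered Gaussian perturbation — periodic or dispersive — of size $\eta$ ever leaves a fixed neighbourhood of $\phi_{k_+}$, let alone reaches distance $\tfrac12\|\phi_{k_+}\|_{L^2}$. Your closing paragraph concedes this and gestures at ``a different, larger-amplitude perturbation or a separate mechanism,'' but a larger-amplitude perturbation violates the requirement $\|u_0-\phi_{k_+,\nu}\|_\Sigma<\eta$, and the dispersive Gaussian orbits you invoke do not come arbitrarily close to the center in phase space. So part 1 of the lemma is not proved for $k_+$ by your argument, and the strong $\tfrac12\|\phi\|_{L^2}$ bound is not obtained for $k_-$ either (your small periodic/heteroclinic analysis only yields some fixed $C(\delta)>0$).

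The missing idea is to perturb the \emph{center} of the Gausson rather than its width. The paper takes $u_0(x)=\phi_{k_+}(x-x_0)$ with $|x_0|$ small; by the translation formula \eqref{eq:translation} the solution is exactly $\phi_{k_+}\(x-x_0\cosh(\omega t)\)$ times a real phase $\alpha(x_0,x,t)$, so the center escapes like $\cosh(\omega t)$ under the repulsive potential. One then bounds
\begin{equation*}
\inf_{\theta}\|u(t)-e^{i\theta}\phi_{k_+}\|_{L^2}^2\ge 2\|\phi_{k_+}\|_{L^2}^2-2\int\phi_{k_+}\(x-x_0\cosh(\omega t)\)\phi_{k_+}(x)\,dx,
\end{equation*}
and the overlap integral tends to $0$ as $t\to\infty$ for any $x_0\ne0$, giving a distance approaching $\sqrt2\,\|\phi_{k_+}\|_{L^2}$. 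This works identically for $k_-$, for $k_+$, and in the limiting case $-\lambda=\omega$, and delivers the quantitative $\tfrac12\|\phi\|_{L^2}$ bound uniformly in $\eta$. Your part 2 argument (width perturbation with $\dot\tau(0)=\eta$ when $-\lambda=\omega$) is correct as far as it goes — it is essentially the paper's Lemma~\ref{lem:unstable-lin} — but it is not needed for Lemma~\ref{lem:unstable} once the translation mechanism is in place.
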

\begin{proof}
We present the argument for $\phi_{k_+}$, to shorten notations: considering
$\phi_{k_\pm,\nu}$ for $\nu\in \R$ goes along the same lines, and the
argument includes the limiting case $-\lambda=\omega>0$. For all $\eta>0$,
then exists $\delta>0$ such that for $|x_0|<\delta$,
\begin{equation*}
  \|u_0-\phi_{k_+}\|_\Sigma<\eta,\quad u_0(x) = \phi_{k_+}(x-x_0).
\end{equation*}
In view of \eqref{eq:translation}, the solution to \eqref{eq:logNLSrep} with
initial datum $u_0$ is given by
\begin{equation*}
  u(t,x) = \phi_{k_+}\(x-x_0\cosh(\omega t)\)e^{i\omega \sinh(\omega
    t)x_0\cdot x -\frac{i\omega|x_0|^2}{4}\sinh(2\omega
    t)}.
\end{equation*}
Therefore, for any $t>0$,
\[\inf_{\theta\in \R}\|u(t)
    -e^{i\theta}\phi_{k_+}\|^2_{L^2(\R^d)} \ge \int_{\R^d}\left|
      \phi_{k_+}\(x-x_0\cosh(\omega t)\) - \phi_{k_+}(x)\right|^2dx.\]
Indeed, denote $u(t,x)=\phi_{k_+}\(x-x_0\cosh(\omega
t)\)e^{i\alpha(x_0, x, t)}$ with $\alpha (x_0, x, t)\in \R$ given by
the above formula. Then
\begin{align*}
&\|u(t)-e^{i\theta}\phi_{k_+}\|^2_{L^2(\R^d)}=\|\phi_{k_+}\(x-x_0\cosh(\omega t)\)-e^{i(\theta-\alpha(x_0, x,t))}\phi_{k_+}\|^2_{L^2(\R^d)}\\
&\quad=2\|\phi_{k_+}\|^2_{L^2(\R^d)}-2\int_{\R^d}
\cos(\theta-\alpha(x_0, x,t))\phi_{k_+}\(x-x_0\cosh(\omega t)\)\phi_{k_+}(x)dx,
\end{align*}
which implies
\begin{align*}
&\quad \inf_{\theta\in \R}\|u(t)
    -e^{i\theta}\phi_{k_+}\|^2_{L^2(\R^d)}\\
   & =2\|\phi_{k_+}\|^2_{L^2(\R^d)}-2
\sup_{\theta\in \R}\int_{\R^d}
\cos(\theta-\alpha(x_0, x,t))\phi_{k_+}\(x-x_0\cosh(\omega t)\)\phi_{k_+}(x)dx\\
&\ge 2\|\phi_{k_+}\|^2_{L^2(\R^d)}-2
\int_{\R^d}\phi_{k_+}\(x-x_0\cosh(\omega t)\)\phi_{k_+}(x)dx\\
&=\left\|
      \phi_{k_+}\(x-x_0\cosh(\omega t)\) - \phi_{k_+}(x)\right\|_{L^2(\R^d)}^2.
\end{align*}
It becomes obvious that picking $t$ sufficiently large (in terms
  of $\eta$) leads to
  \[\inf_{\theta\in \R}\|u(t)
    -e^{i\theta}\phi_{k_+}\|^2_{L^2(\R^d)} \ge
    \frac{1}{2}\|\phi_{k_+}\|_{L^2(\R^d)}^2.\]
  This rules out orbital stability, even in the $L^2$-norm, for
  initial data close to $\phi_{k_+}$ in the $\Sigma$-topology.
\end{proof}
\begin{remark}
   We can adapt the above proof by using the Galilean
   invariance \eqref{eq:galileo}, and consider instead
   \[ u_0(x) = \phi_{k_+}(x)e^{iv\cdot x},\quad |v|\ll 1.\]
\end{remark}
\begin{remark}
  It is clear from the argument that $u_0$ is close to $\phi_{k_+}$ in
  $\Sigma$, but also in stronger norms, while orbital stability is
  ruled out by measuring only the $L^2$-norm.
\end{remark}
The above arguments do not rule out orbital stability when the initial datum are  restricted to be radially symmetric. In \cite{Caz83}, this
    restriction was considered essentially to obtain compactness
    properties (the embedding of $H^1_{\rm rad}(\R^d)$ into
    $L^p(\R^d)$ for $2\le p<\frac{2}{(d-2)_+}$ is compact). Note that
      $\Sigma$ is compactly embedded into  $L^p(\R^d)$ for $2\le
      p<\frac{2}{(d-2)_+}$.
    The lemma below shows
    instability for $\phi_{k_-}$
    even at the radial level.

\begin{lemma}\label{lem:unstable-lin}
  Let $\nu\in \R$. \\
  $1.$ Suppose $-\lambda>\omega>0$. The solitary wave $\phi_{k_-,\nu}(x)e^{i\nu t}$
  is unstable even if we restrict Definition~\ref{def:stability} to
  radial solutions.\\
  $2.$ The same holds for $\phi_{\omega,\nu}(x)e^{i\nu t}$ in the case
  $-\lambda=\omega>0$.
\end{lemma}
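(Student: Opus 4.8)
The plan is to exploit the Gaussian solutions constructed in the previous section, in particular the observation that $\phi_{k_-}$ corresponds (via the change of variables $\tau=1/\sqrt{k_+}$) to one of the two stationary points of the ODE \eqref{eq:tau-general}, and that this particular stationary point is the \emph{unstable} one from the phase-portrait perspective, unlike $\phi_{k_+}$ which sits at the center surrounded by periodic orbits. Concretely, I would pick $\tau(0)$ close to $1/\sqrt{k_+}$ but chosen so that the energy constant $C_0$ in \eqref{eq:tau-energy} places the trajectory on an \emph{unbounded} branch rather than a periodic one — by the Proposition, for every value of $C_0$ above the separatrix level the solution satisfies $\tau(t)\to\infty$. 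Since $\phi_{k_-}(x)=e^{-dk_+/(4\lambda)}e^{-k_+|x|^2/2}$ is exactly the initial Gaussian with $a(0)=k_+$, i.e. $\tau(0)=1/\sqrt{k_+}$ and $\dot\tau(0)=0$, a small perturbation of $\dot\tau(0)$ away from zero — or equivalently a Gaussian initial datum $u_0(x)=b_0 e^{-a_0 x^2/2}$ with $a_0$ having a tiny imaginary part, or $\RE a_0$ slightly less than $k_+$ — yields a $\Sigma$-small (indeed $C^\infty$-small) perturbation of $\phi_{k_-}$ whose corresponding $\tau$ is on a dispersive branch.

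The main steps are then: (1) Start from $u_0(x)=b_0 e^{-a_0 x^2/2}$ with $a_0$ real, $a_0\in(k_-,k_+)$, and $b_0$ chosen so that $\|u_0\|_{L^2}=\|\phi_{k_-}\|_{L^2}$; as $a_0\uparrow k_+$ one has $u_0\to\phi_{k_-}$ in $\Sigma$ (and in every Sobolev norm), by explicit Gaussian integrals. Note $u_0$ is radial. (2) By the tensorization/Gaussian ansatz, the solution stays Gaussian, $u(t,x)=b(t)e^{-a(t)x^2/2}$ with $a=1/\tau^2-i\dot\tau/\tau$, and $\tau$ solves \eqref{eq:tau-general} with $\tau(0)=1/\sqrt{a_0}\in(1/\sqrt{k_+},1/\sqrt{k_-})$, $\dot\tau(0)=0$. (3) Compute $C_0$ and check that this initial point lies \emph{strictly between} the two equilibria on a level set that the Proposition identifies as giving $\tau(t)\to\infty$ — i.e. verify it is not on the periodic branch. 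This is where one must be a little careful: the region between $\tau_-$ and $\tau_+$ contains both periodic orbits (near the center $\tau_+$) and, beyond the separatrix through the saddle $\tau_-$, unbounded ones; I must show that for $a_0$ slightly below $k_+$, $\dot\tau(0)=0$, the point $(\tau(0),0)$ lies on the unbounded side. Since $\tau_+$ is the center and $\tau_-$ is the saddle with $\tau_-<\tau_+$, the equilibrium point $\tau=1/\sqrt{a_0}$ with $a_0$ just below $k_+$ has $1/\sqrt{a_0}$ just \emph{above} $1/\sqrt{k_+}=\tau_+$, hence on the far side of the center from the saddle — so in fact it is \emph{inside} a periodic orbit. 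To land on a dispersive trajectory I should instead take $a_0$ slightly \emph{above} $k_+$ (so $\tau(0)$ just below $\tau_+$) — still wrong, still periodic — so the cleanest fix is to keep $\RE a_0=k_+$ and give $a_0$ a small \emph{imaginary part}, i.e. $\dot\tau(0)\neq 0$: then the trajectory leaves $\tau_+$ along a periodic orbit of small amplitude. Hmm — that is still bounded. So the genuine mechanism must be different for $\phi_{k_-}$: since $\phi_{k_-}\leftrightarrow\tau_+$ (the \emph{center}), small Gaussian perturbations stay periodic and bounded, and we do \emph{not} get dispersion from Gaussian data alone.

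Therefore the right approach for Lemma~\ref{lem:unstable-lin} is to observe that even though $\tau$ stays bounded and periodic, the solution $u(t,x)=b(t)e^{-a(t)x^2/2}$ oscillates between a thin and a wide Gaussian with $\tau$ ranging over a whole period, so that $\inf_\theta\|u(t)-e^{i\theta}\phi_{k_-,\nu}\|_{L^2}$ stays bounded below by a fixed positive constant (roughly $c\,\|\phi_{k_-,\nu}\|_{L^2}$ where $c$ depends on the amplitude of the periodic orbit, not on how small the perturbation is) — because a periodic orbit through a point near the center, but not \emph{at} it, has an amplitude comparable to its distance from the center, yet the \emph{time to traverse} it is bounded below by the (nonzero) period near the center, so at the half-period the width $1/\tau(t)^2$ has changed by a fixed fraction. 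The key computation is a lower bound: if $\tau$ oscillates on $[\tau_+-\rho,\tau_++\rho']$ with $\rho,\rho'>0$ fixed (determined by $C_0$), then $\|b(t)e^{-a(t)x^2/2}-e^{i\theta}\phi_{k_-,\nu}\|_{L^2}^2\ge 2M-2\int e^{-(\RE a(t)+k_+)x^2/2}\cdot(\text{amplitude factor})$, and using $\|u(t)\|_{L^2}=\|u_0\|_{L^2}=\|\phi_{k_-,\nu}\|_{L^2}$ together with $\RE a(t)=1/\tau(t)^2\neq k_+$ for $t$ at a quarter-period, one gets $\inf_\theta\|u(t)-e^{i\theta}\phi_{k_-,\nu}\|_{L^2}\ge \frac12\|\phi_{k_-,\nu}\|_{L^2}$, say. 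The main obstacle is making the constant $\rho$ (hence the lower bound) \emph{uniform in} $\eta$: I need that an arbitrarily small $\Sigma$-perturbation of $\phi_{k_-}$ can be chosen to land on a periodic orbit whose amplitude is \emph{not} small — which is false for radial \emph{Gaussian} perturbations near the center. This forces me to instead perturb $\phi_{k_-}$ by a superposition of the ground and an excited mode of the harmonic oscillator $H_{k_-}$ — a genuinely non-Gaussian but still radial and $\Sigma$-small perturbation — and track it through the (now non-explicit) flow, which is the real technical heart: one shows using the conservation of mass and energy, together with the fact that $E(\phi_{k_-})$ is \emph{not} a local minimum of $E$ on the mass-constraint manifold (consistent with the "no ground state" discussion), that the solution cannot return near the orbit of $\phi_{k_-}$. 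I expect the proof in the paper to choose such a perturbation explicitly (e.g. using the second Hermite function, which keeps radiality) and to use a virial-type or direct energy argument; the delicate point is precisely to produce an $L^2$-displacement bounded below independently of the size of the initial perturbation.
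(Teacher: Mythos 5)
There is a genuine and fatal error at the very start: you misidentify which Gausson sits at which equilibrium of the ODE \eqref{eq:tau-general}, and this sends the whole argument off the rails. Since $a=1/\tau^2-i\dot\tau/\tau$ and $\phi_{k_-}(x)=e^{-dk_-/(4\lambda)}e^{-k_-|x|^2/2}$ has $a\equiv k_-$, the stationary solution $\phi_{k_-}$ corresponds to $\tau=1/\sqrt{k_-}$, which is the \emph{larger} of the two equilibria (because $k_-<k_+$), not to $\tau=1/\sqrt{k_+}$ as you write (your formula $\phi_{k_-}=e^{-dk_+/(4\lambda)}e^{-k_+|x|^2/2}$ is simply incorrect). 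Moreover the linearization of \eqref{eq:tau-general} at $\tau=1/\sqrt{k}$ gives $\ddot h=\Omega_{\rm eff}h$ with $\Omega_{\rm eff}=\omega^2-2\lambda k-3k^2=-4k(k+\lambda)$, which is \emph{positive} for $k=k_-$ (since $k_-+\lambda=-\sqrt{\lambda^2-\omega^2}<0$) and negative for $k=k_+$. So the larger equilibrium ($\phi_{k_-}$) is the saddle and the smaller one ($\phi_{k_+}$) is the center --- exactly the opposite of your assignment ``$\tau_+$ is the center, $\tau_-$ is the saddle''. Having convinced yourself that $\phi_{k_-}$ sits at the center, you conclude that radial Gaussian perturbations only produce small periodic oscillations and that ``the genuine mechanism must be different,'' and you abandon the Gaussian ansatz. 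In fact the paper's proof is precisely the Gaussian linearization you talked yourself out of: perturb the initial data of \eqref{eq:tau-general} at the saddle $\tau=1/\sqrt{k_-}$ by $\eps$ (e.g.\ $\tau(0)=1/\sqrt{k_-}+\eps$, $\dot\tau(0)=0$); then $h(t)=\eps\cosh(t\sqrt{\Omega_{\rm eff}})$ grows until it reaches a fixed threshold $\delta$ at some time $t_0$, a standard remainder estimate keeps the true $\tau$ within $|h|/2$ of $\tau_-+h$ up to that time, and the $L^2$ distance between Gaussians of widths $1/\tau(t_0)^2$ and $k_-$ is then bounded below by a constant $C(\delta)$ independent of $\eps$. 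All such data are radial, which is the whole point of the lemma.

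Beyond the sign confusion, the fallback you propose --- perturbing by a radial Hermite mode and running a conservation-law/virial argument exploiting that $E(\phi_{k_-})$ is not a constrained local minimum --- is only sketched, and its ``real technical heart'' (an $L^2$-displacement bounded below uniformly in the perturbation size, for a non-explicit non-Gaussian flow) is exactly the part you do not supply; nothing in the paper requires it. You also do not address item $2$ of the lemma: in the degenerate case $-\lambda=\omega$ one has $\Omega_{\rm eff}=0$, and the paper handles it by taking $\dot h(0)=\eps$, $h(0)=0$, so that $h(t)=\eps t$ still leaves any fixed neighbourhood of the equilibrium. As it stands the proposal does not prove the statement, and its central structural claim about the phase portrait is backwards.
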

\begin{proof}
  Assume $-\lambda>\omega>0$. We show that $u_{k_-,\nu}$ is unstable even as a Gaussian solution centered at
  the origin, by linearizing
  \eqref{eq:tau-general} about $\tau_-=1/\sqrt{k_-}$: we compute the linearization
  as
  \begin{equation*}
    \ddot h = \omega^2 h -2\lambda k_- h -3k_-^2 h = \Omega_{\rm eff}h,
  \end{equation*}
  where
  \[\Omega_{\rm eff}=\omega^2 -2\lambda k_- - 3k_-^2= -4k_-^2 -4\lambda
    k_- =-4k_-(k_-+\lambda). \]
  Since $k_-+\lambda<0$, the linearized operator is such that
  $\Omega_{\rm eff}>0$, so $h$ grows exponentially. Of course
  linearizing makes sense only for sufficiently small $h$, but this
  is enough to contradict the definition of orbital stability. Indeed,
  there exists $\delta>0$ such that as long as $|h(t)|\le \delta$, we
  can write the solution $\tau$ to \eqref{eq:tau-general} with
  $\tau(0)= \tau_-+h(0)$ and $\dot\tau(0)=0$ as
  \begin{equation*}
    \tau(t) = \tau_-+h(t)+r(t),\quad \text{with}\quad |r(t)|\le
    \frac{|h(t)|}{2}.
  \end{equation*}
  For $0<\eps<\delta$, let $h$ solve
  \begin{equation*}
    \ddot h = \Omega_{\rm eff}h,\quad h(0)=\eps,\quad \dot h(0)=0.
  \end{equation*}
  As $h(t) = \eps\cosh(t\sqrt{\Omega_{\rm eff}} )$ grows exponentially, there exists $t_0>0$ such that
  $h(t_0)=\delta$, and the triangle inequality yields
  \begin{equation*}
    |\tau(t_0)-\tau_-|\ge \frac{\delta}{2}.
  \end{equation*}
  Now if $u$ denotes the Gaussian solution associated with $\tau$, we
  see that for all $\eta>0$, picking $\eps>0$ sufficiently small
  ensures
  \begin{equation*}
    \|u(0)-\phi_{k_-}\|_\Sigma<\eta,
  \end{equation*}
  while, in view of \eqref{eqb}, setting $k(t) = 1/\tau(t)^2$,
  \begin{align*}
    \sup_{t\ge 0}\inf_{\theta\in
      \R}\|u(t)-e^{i\theta}&\phi_{k_-}\|_{L^2(\R^d)}\ge \inf_{\theta\in
     \R}\|u(t_0)-e^{i\theta}\phi_{k_-}\|_{L^2(\R^d)}\\
&\ge e^{-dk_-/(4\lambda)} \left\|\(\frac{\tau_-}{\tau(t_0)}\)^{d/2}
 e^{-k(t_0)|x|^2/2}- e^{-k_-|x|^2/2}\right\|_{L^2(\R^d)}\\
 &\ge C(\delta)>0,
  \end{align*}
  where $C(\delta)$ is independent of $\eps$, hence independent of $\eta$.
  Thus, we have the same instability results as in Lemma~\ref{lem:unstable}, at
  the level of radial Gaussian solutions.
  \smallbreak

  In the case $-\lambda=\omega>0$, we find $\Omega_{\rm eff}=0$, hence
  $h(t) = \dot h(0)t+h(0)$.
  We now pick $\dot h(0)=\eps$, $h(0)=0$, so $h$ is still unbounded as
  time grows.
  We  thus consider the solution $\tau$ to \eqref{eq:tau-general} with
  $\tau(0)= \tau_-\(=1/\sqrt\omega\)$ and $\dot\tau(0)=\eps$, and the
  above argument can be repeated.
\end{proof}
\begin{remark}
  For $-\lambda>\omega>0$, the same argument is not conclusive in the case of $k_+$, since we   then have
  \begin{equation*}
    \Omega_{\rm eff} =-4k_+(k_++\lambda)<0.
  \end{equation*}
  The trajectories of the linearized operator are bounded
  (periodic). This is consistent with the phase portrait corresponding
  to the Gaussian case, see Figure~\ref{phase} (recalling that $k_+$
  corresponds to the smaller value $\tau_-$). 
\end{remark}

\section{On the notion of ground state}
\label{sec:ground}

The most standard notions of ground state seem to be the following:
\begin{itemize}
\item Minimizer of the action $E+\nu M$.
\item Minimizer of the energy $E$ for a given mass $M$.
\item Positive solution of $dE+\nu dM=0$.
\end{itemize}
In the case of an homogeneous nonlinearity, the three notions
coincide, and the ground state is unique, up to the invariants of the
equation; see e.g. \cite[Chapter~8]{CazCourant}. In the absence of
potential ($\omega=0$), the Gausson is the only positive stationary
solution to \eqref{eq:logNLSrep} \cite{Troy2016}.
In the present case, we have seen already that for
$\-\lambda>\omega>0$, there are two
distinct solutions to the stationary equation $dE=0$, namely
$\phi_{k_-}$ and $\phi_{k_+}$: the last notion cannot be relevant. On
the other hand, because the potential is unbounded from below, the
first two notions are not relevant either: given $u\in \Sigma$,
\begin{equation*}
  E(u_{x_0})\Tend {|x_0|} \infty -\infty,\quad u_{x_0}(x):= u(x-x_0).
\end{equation*}
In \cite{BeJe16}, the second notion is adapted, by requiring in
addition that the ground state is a critical point of the energy on
the set of function with a given mass $M$, which is meaningful even
when the energy is unbounded from below on this set. The case of the
logarithmic nonlinearity turns out to be rather specific: a solitary
wave $e^{i\nu t}\phi(x)$ solves \eqref{eq:logNLSrep} if and only if
$\phi$ solves
\begin{equation*}
  -\frac{1}{2}\Delta \phi +\nu \phi-\omega^2\frac{|x|^2}{2}\phi
  +\lambda \phi\ln|\phi|^2=0.
\end{equation*}
Multiplying this equation by $\bar \phi$ and integrating shows that
$\phi$ must solves
\begin{equation*}
  \|\nabla \phi\|_{L^2}^2 - \omega^2 \|x
                \phi\|_{L^2}^2 + 2 \lambda \int_{\R^d} |\phi|^2
                \ln{|\phi|^2} d x + 2 \nu \|\phi\|_{L^2}^2=0.
\end{equation*}
This Pohozaev identity defines the \emph{Nehari manifold}. But we see
that the above left hand side differs from twice the energy
\begin{equation*}
   E(u)=\frac{1}{2}\|\nabla u\|_{L^2(\R^d)}^2-\frac{\omega^2}{2}\|x u\|_{L^2}^2+\lambda\int_{\R^d}
    |u|^2\(\ln|u|^2-1\)dx
\end{equation*}
only by the term $2(\lambda+\nu) M$.
Following \cite{Ar16,ACS20} (see also \cite{SqSz2015,Shuai2019}), we
thus introduce
 the action and the Nehari functional,
\begin{align*}
    S_\nu (u) &:= E(u) + \nu \|u\|_{L^2}^2, \\
    I_\nu (u) &:=\|\nabla u\|_{L^2}^2 - \omega^2 \|x
                u\|_{L^2}^2 + 2 \lambda \int_{\R^d} |u|^2
                \ln{|u|^2} d x + 2 \nu \|u\|_{L^2}^2=
                2S_\nu(u)+2\lambda\|u\|_{L^2}^2 ,
\end{align*}
and consider the minimization problem
\begin{align*}
    \delta(\nu) :=& \inf \{ S_{\nu} (u) \, | \, u \in \Sigma
              \setminus \{ 0 \}, I_{\nu} (u) = 0 \} \\
        =& - \lambda \inf \{ \|u\|_{L^2}^2 \, | \, u \in \Sigma
       \setminus \{ 0 \}, I_{\nu} (u) = 0 \}.
\end{align*}
The set of ground states is defined by
\begin{equation*}
    \mathcal{G}_\nu :=\{ \phi \in \Sigma \setminus \{ 0 \} \, | \, I_{\nu} (u) = 0, S_\nu (\phi) = \delta(\nu) \}.
\end{equation*}
We check that
\[ I_0(\phi_{k_\pm})=0\quad (\text{hence }I_\nu(\phi_{k_\pm,\nu})=0).\]
In view of Lemma~\ref{lem:compare}, $\phi_{k_-}$ does not belong to
$\mathcal{G}_0$, and should thus not be considered as a ground state,
even though it is a positive solution to \eqref{eq:stationary}.
\bigbreak

It turns out that $\phi_{k_+}$ is not a ground state either:
\begin{proposition}
Let $\lambda<0<\omega$.   For any $\nu\in \R$, $\delta(\nu)=0$, and $ \mathcal{G}_\nu =\emptyset$.
\end{proposition}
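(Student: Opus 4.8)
The plan is to show that the infimum $\delta(\nu)$ equals $0$ by constructing, for each $\nu\in\R$, a sequence $(u_n)\subset\Sigma\setminus\{0\}$ with $I_\nu(u_n)=0$ and $\|u_n\|_{L^2}^2\to 0$; since $\delta(\nu)=-\lambda\inf\{\|u\|_{L^2}^2 : I_\nu(u)=0\}$ and $-\lambda>0$, this forces $\delta(\nu)=0$. Once $\delta(\nu)=0$ is established, any $\phi\in\mathcal G_\nu$ would have to satisfy $I_\nu(\phi)=0$ and $S_\nu(\phi)=-\lambda\|\phi\|_{L^2}^2=0$, hence $\|\phi\|_{L^2}=0$, contradicting $\phi\neq 0$; therefore $\mathcal G_\nu=\emptyset$. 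So the whole proposition reduces to the construction of this minimizing sequence.

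The natural mechanism is to exploit the fact that the operator $-\Delta-\omega^2|x|^2$ has no definite sign, together with the scaling freedom of the logarithmic nonlinearity and translation in the unbounded potential. Concretely, I would start from a fixed profile, say a normalized Gaussian $\chi\in\Sigma$, and consider a two-parameter family $u_{a,x_0}(x)=a\,\chi(x-x_0)$. Translating by $x_0$ drives the potential term $-\omega^2\|x\,\chi(\cdot-x_0)\|_{L^2}^2\sim-\omega^2|x_0|^2\|\chi\|_{L^2}^2\to-\infty$, while the kinetic and entropy terms stay bounded (up to an $a$-dependent but controlled shift from the $\ln a^2$ piece); rescaling the amplitude by $a$ changes $\int|u|^2\ln|u|^2$ by $\ln(a^2)\|u\|_{L^2}^2$ plus a factor $a^2$ in front of everything. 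The key point is that $I_\nu$ is, for fixed shape, essentially a quadratic-times-$\ln$ expression in $(a,x_0)$: writing out $I_\nu(a\chi(\cdot-x_0))=a^2\big[K(x_0)+2\lambda\ln(a^2)\|\chi\|_{L^2}^2\big]$ for an explicit $K(x_0)=\|\nabla\chi\|_{L^2}^2-\omega^2\|x\,\chi(\cdot-x_0)\|_{L^2}^2+2\lambda\int|\chi|^2\ln|\chi|^2+2\nu\|\chi\|_{L^2}^2$, one can, for each $x_0$, solve $I_\nu=0$ by choosing $\ln(a^2)=-K(x_0)/(2\lambda\|\chi\|_{L^2}^2)$. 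As $|x_0|\to\infty$, $K(x_0)\to-\infty$ (since $\lambda<0$ is fine, the dominant term is $-\omega^2|x_0|^2$), so $\ln(a^2)\to-\infty$, i.e. $a\to 0$, whence $\|u_{a,x_0}\|_{L^2}^2=a^2\|\chi\|_{L^2}^2\to 0$. This produces the desired sequence and gives $\delta(\nu)=0$.

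The main obstacle, and the step requiring care, is making sure the minimizing sequence genuinely lies on the constraint set $\{I_\nu=0\}$ rather than merely approaching it, and that one has not accidentally produced the trivial function — one must check $a>0$ strictly for each finite $x_0$, which holds because the equation $\ln(a^2)=-K(x_0)/(2\lambda\|\chi\|_{L^2}^2)$ always has a (unique, positive) solution $a$. A secondary point is bookkeeping with signs: since $\lambda<0$, dividing by $2\lambda$ flips inequalities, so one should verify that $K(x_0)\to-\infty$ indeed yields $a\to 0$ and not $a\to\infty$; a direct substitution confirms it. One should also remark that this argument is insensitive to the sign of $\nu$ and works verbatim for all $\nu\in\R$, and note for completeness that it shows more: the set $\{I_\nu=0\}$ is unbounded below for $S_\nu$ only in the trivial sense that $S_\nu\ge 0$ there with infimum $0$ not attained, consistent with the global existence result and the absence of any variational characterization of $\phi_{k_\pm}$.
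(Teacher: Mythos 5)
Your proposal is correct and follows essentially the same route as the paper: both use a two-parameter family of translated, amplitude-rescaled Gaussians, exploit the unboundedness of $-\omega^2|x|^2$ to drive one term of $I_\nu$ to $-\infty$, and use the logarithmic scaling $\ln(a^2)$ to land exactly on the Nehari manifold with arbitrarily small mass, then conclude $\mathcal{G}_\nu=\emptyset$ from the identity $S_\nu=-\lambda\|u\|_{L^2}^2$ on $\{I_\nu=0\}$. The only (immaterial) difference is that you solve for the amplitude $a$ as a function of $x_0$ and send $|x_0|\to\infty$, whereas the paper fixes the small amplitude $\eps$ first and then chooses $|x_0|\sim\sqrt{-\ln\eps}/\omega$.
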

\begin{proof}
  Consider the two-parameter family of Gaussians
  \begin{equation*}
    \gamma_{\eps,x_0}(x)=\eps \, e^{-|x-x_0|^2/2}.
  \end{equation*}
  Naturally, the parameter $\eps>0$ is aimed at being arbitrarily
  small, and we use the center $x_0$ to adjust the size of the
  momentum so that $\gamma_{\eps,x_0}$ belongs to the Nehari
  manifold.
  The choice of a variance equal to one is arbitrary, for the
  following computation would lead to the same conclusion for any fixed
  variance. We compute:
  \begin{equation*}
    \|\gamma_{\eps,x_0}\|_{L^2(\R^d)}^2=\eps^2\pi^{d/2},\quad \|\nabla
    \gamma_{\eps,x_0}\|_{L^2(\R^d)}^2=\eps^2\frac{d}{2}\pi^{d/2} ,
  \end{equation*}
  \begin{equation*}
    \|x\gamma_{\eps,x_0}\|_{L^2(\R^d)}^2= \eps^2 \int_{\R^d}|y+x_0|^2
    e^{-|y|^2}dy = \eps^2\frac{d}{2}\pi^{d/2}+ \eps^2|x_0|^2 \pi^{d/2},
  \end{equation*}
  \begin{equation*}
    \int_{\R^d}\gamma_{\eps,x_0}^2\ln\(\gamma_{\eps,x_0}^2\) =
    \ln(\eps^2) \|\gamma_{\eps,x_0}\|_{L^2(\R^d)}^2-\|\nabla
    \gamma_{\eps,x_0}\|_{L^2(\R^d)}^2 = \eps^2 \pi^{d/2}\( \ln(\eps^2) -\frac{d}{2}\),
  \end{equation*}
  hence:
  \begin{equation*}
    I\( \gamma_{\eps,x_0}\) = \eps^2 \pi^{d/2}\(
    (1-2\lambda)\frac{d}{2}
    -\omega^2\frac{d}{2}-\omega^2|x_0|^2+2\lambda \ln(\eps^2)+2\nu \).
  \end{equation*}
  For $\eps>0$ sufficiently small, $2\lambda \ln(\eps^2) +  (1-2\lambda)\frac{d}{2}
    -\omega^2\frac{d}{2}+2\nu>0$ (recall that $\lambda<0$),  and we can find
    $x_0\in \R^d$ (with $|x_0|$ of order $\sqrt{-\ln \eps}/\omega$) such that
    $ I\( \gamma_{\eps,x_0}\)=0$. But of course
    $\|\gamma_{\eps,x_0}\|_{L^2(\R^d)}$ is arbitrarily small, hence
    $\delta(\nu)=0$. The second line in the definition of
    $\delta(\nu)$ obviously implies that $\mathcal{G}_\nu =\emptyset$.
\end{proof}

\begin{remark}
  In the linear case $\lambda=0<\omega$, there is no ground state, and
  more generally, there is no solitary wave, as every solution is
  dispersive. This can be seen for instance \emph{via} the vector field $J(t) = \omega x\sinh(\omega t)+i\cosh(\omega t )\nabla$: as
  observed in \cite[Lemma~2.3]{CaSIMA}, if $u$ solves
  \begin{equation*}
    i\d_t u+\frac{1}{2}\Delta u =-\omega^2\frac{|x|^2}{2}u,
  \end{equation*}
  then so does $Ju$, and since $J$ can be factorized as
  \begin{equation*}
    J(t) = i\cosh(\omega t) e^{i\omega \frac{|x|^2}{2}\tanh(\omega t)}
    \nabla\( e^{-i\omega \frac{|x|^2}{2}\tanh(\omega t)}\cdot\),
  \end{equation*}
  Gagliardo-Nirenberg inequality yields, for $2\le
  p<\frac{2}{(d-2)_+}$,
\begin{align*}
  \|u(t)\|_{L^p(\R^d)}&\le \frac{C(p,d)}{(\cosh(\omega t ))^{\delta(p)}}
  \|u(t)\|_{L^2}^{1-\delta(p)}\|J(t)u\|_{L^2}^{\delta(p) }\\
  &=
    \frac{C(p,d)}{(\cosh(\omega t ))^{\delta(p)} }
    \|u_0\|_{L^2}^{1-\delta(p)}\|\nabla u_0\|_{L^2}^{\delta(p) },\quad \delta(p)=d\(\frac{1}{2}-\frac{1}{p}\),
\end{align*}
since the $L^2$-norm is preserved by the flow. Therefore, if $u_0\in
\Sigma$, the $L^p$-norm of $u$ decreases exponentially in time, and no
solitary wave exists. The existence of solitary waves when $-\lambda\ge
\omega>0$ is thus due to the presence of the logarithmic nonlinearity,
which is sufficiently strong (due to the singularity of the logarithm
at the origin) to counterbalance the exponential linear dispersion.
\end{remark}

\bibliographystyle{abbrv}
\bibliography{biblio}

\begin{thebibliography}{10}

\bibitem{AlvesJi2020}
C.~O. Alves and C.~Ji.
\newblock Multiple positive solutions for a {S}chr\"{o}dinger logarithmic
  equation.
\newblock {\em Discrete Contin. Dyn. Syst.}, 40(5):2671--2685, 2020.

\bibitem{Ar16}
A.~H. Ardila.
\newblock Orbital stability of {G}ausson solutions to logarithmic
  {S}chr\"odinger equations.
\newblock {\em Electron. J. Differential Equations}, pages Paper No. 335, 9,
  2016.

\bibitem{ACS20}
A.~H. {Ardila}, L.~{Cely}, and M.~{Squassina}.
\newblock {Logarithmic Bose-Einstein condensates with harmonic potential}.
\newblock {\em {Asymptotic Anal.}}, 116(1):27--40, 2020.

\bibitem{BEC}
A.~V. Avdeenkov and K.~G. Zloshchastiev.
\newblock Quantum {B}ose liquids with logarithmic nonlinearity:
  {S}elf-sustainability and emergence of spatial extent.
\newblock {\em J. Phys. B: Atomic, Molecular Optical Phys.}, 44(19):195303,
  2011.

\bibitem{BeJe16}
J.~Bellazzini and L.~Jeanjean.
\newblock On dipolar quantum gases in the unstable regime.
\newblock {\em SIAM J. Math. Anal.}, 48(3):2028--2058, 2016.

\bibitem{BiMy76}
I.~Bia{\l}ynicki-Birula and J.~Mycielski.
\newblock Nonlinear wave mechanics.
\newblock {\em Ann. Physics}, 100(1-2):62--93, 1976.

\bibitem{BiMy79}
I.~Bia{\l}ynicki-Birula and J.~Mycielski.
\newblock Gaussons: {S}olitons of the logarithmic {S}chr{\"o}dinger equation.
\newblock {\em Special issue on solitons in physics, Phys. Scripta},
  20:539--544, 1979.

\bibitem{Bouharia2015}
B.~Bouharia.
\newblock Stability of logarithmic {Bose-Einstein} condensate in harmonic trap.
\newblock {\em Modern Physcis Letters B}, 29(01):1450260, 2015.

\bibitem{buljan}
H.~Buljan, A.~{\v{S}}iber, M.~Solja{\v{c}}i{\'c}, T.~Schwartz, M.~Segev, and
  D.~Christodoulides.
\newblock Incoherent white light solitons in logarithmically saturable
  noninstantaneous nonlinear media.
\newblock {\em Phys. Rev. E}, 68(3):036607, 2003.

\bibitem{CaSIMA}
R.~Carles.
\newblock Nonlinear {S}chr\"odinger equations with repulsive harmonic potential
  and applications.
\newblock {\em SIAM J. Math. Anal.}, 35(4):823--843, 2003.

\bibitem{CaFe-p}
R.~Carles and G.~Ferriere.
\newblock Logarithmic {S}chr\"odinger equation with quadratic potential.
\newblock Preprint, archived at
  \url{https://hal.archives-ouvertes.fr/hal-03247353}, 2021.

\bibitem{CaGa18}
R.~Carles and I.~Gallagher.
\newblock Universal dynamics for the defocusing logarithmic {S}chr{\"o}dinger
  equation.
\newblock {\em Duke Math. J.}, 167(9):1761--1801, 2018.

\bibitem{CaPe14}
R.~Carles and D.~Pelinovsky.
\newblock On the orbital stability of {G}aussian solitary waves in the
  log-{K}d{V} equation.
\newblock {\em Nonlinearity}, 27(12):3185--3202, 2014.

\bibitem{Caz83}
T.~Cazenave.
\newblock Stable solutions of the logarithmic {S}chr\"odinger equation.
\newblock {\em Nonlinear Anal.}, 7(10):1127--1140, 1983.

\bibitem{CazCourant}
T.~Cazenave.
\newblock {\em Semilinear {S}chr\"odinger equations}, volume~10 of {\em Courant
  Lecture Notes in Mathematics}.
\newblock New York University Courant Institute of Mathematical Sciences, New
  York, 2003.

\bibitem{CaLi82}
T.~Cazenave and P.-L. Lions.
\newblock Orbital stability of standing waves for some nonlinear
  {S}chr\"{o}dinger equations.
\newblock {\em Comm. Math. Phys.}, 85(4):549--561, 1982.

\bibitem{Dunford}
N.~Dunford and J.~T. Schwartz.
\newblock {\em Linear operators. {P}art {II}: {S}pectral theory. {S}elf adjoint
  operators in {H}ilbert space}.
\newblock With the assistance of William G. Bade and Robert G. Bartle.
  Interscience Publishers John Wiley \& Sons\ New York-London, 1963.

\bibitem{hansson}
T.~Hansson, D.~Anderson, and M.~Lisak.
\newblock Propagation of partially coherent solitons in saturable logarithmic
  media: A comparative analysis.
\newblock {\em Phys. Rev. A}, 80(3):033819, 2009.

\bibitem{Hef85}
E.~F. Hefter.
\newblock Application of the nonlinear {S}chr\"odinger equation with a
  logarithmic inhomogeneous term to nuclear physics.
\newblock {\em Phys. Rev. A}, 32:1201--1204, 1985.

\bibitem{JaPe14}
G.~James and D.~Pelinovsky.
\newblock Gaussian solitary waves and compactons in {F}ermi-{P}asta-{U}lam
  lattices with {H}ertzian potentials.
\newblock {\em Proc. R. Soc. Lond. Ser. A Math. Phys. Eng. Sci.},
  470(2165):20130462, 20, 2014.

\bibitem{Jang2010}
J.~Jang.
\newblock Uniqueness of positive radial solutions of {$\Delta u+f(u)=0$} in
  {${\mathbb R}^N$}, {$N\geq 2$}.
\newblock {\em Nonlinear Anal.}, 73(7):2189--2198, 2010.

\bibitem{JoPa93}
R.~Johnson and X.~B. Pan.
\newblock On an elliptic equation related to the blow-up phenomenon inthe
  nonlinear {S}chr\"{o}dinger equation.
\newblock {\em Proc. Roy. Soc. Edinburgh Sect. A}, 123(4):763--782, 1993.

\bibitem{KaWe94}
O.~Kavian and F.~Weissler.
\newblock Self-similar solutions of the pseudo-conformally invariant nonlinear
  {S}chr\"odinger equation.
\newblock {\em Michigan Math. J.}, 41(1):151--173, 1994.

\bibitem{KEB00}
W.~Krolikowski, D.~Edmundson, and O.~Bang.
\newblock Unified model for partially coherent solitons in logarithmically
  nonlinear media.
\newblock {\em Phys. Rev. E}, 61:3122--3126, 2000.

\bibitem{LewinRotaNodari20}
M.~Lewin and S.~Rota~Nodari.
\newblock The double-power nonlinear {S}chr\"odinger equations and its
  generalizations: uniqueness, non-degeneracy and applications.
\newblock {\em Calc. Var. Partial Differ. Equ.}, 59(197), 2020.

\bibitem{DMFGL03}
S.~D. Martino, M.~Falanga, C.~Godano, and G.~Lauro.
\newblock Logarithmic {S}chr{\"o}dinger-like equation as a model for magma
  transport.
\newblock {\em Europhys. Lett.}, 63:472--475, 2003.

\bibitem{Pe17}
D.~E. Pelinovsky.
\newblock On the linearized log-{K}d{V} equation.
\newblock {\em Commun. Math. Sci.}, 15(3):863--880, 2017.

\bibitem{ReedSimon2}
M.~Reed and B.~Simon.
\newblock {\em Methods of modern mathematical physics. {I}{I}. {F}ourier
  analysis, self-adjointness}.
\newblock Academic Press [Harcourt Brace Jovanovich Publishers], New York,
  1975.

\bibitem{Shuai2019}
W.~Shuai.
\newblock Multiple solutions for logarithmic {S}chr\"{o}dinger equations.
\newblock {\em Nonlinearity}, 32(6):2201--2225, 2019.

\bibitem{SqSz2015}
M.~Squassina and A.~Szulkin.
\newblock Multiple solutions to logarithmic {S}chr\"{o}dinger equations with
  periodic potential.
\newblock {\em Calc. Var. Partial Differential Equations}, 54(1):585--597,
  2015.

\bibitem{Troy2016}
W.~C. Troy.
\newblock Uniqueness of positive ground state solutions of the logarithmic
  {S}chr\"{o}dinger equation.
\newblock {\em Arch. Ration. Mech. Anal.}, 222(3):1581--1600, 2016.

\bibitem{ZZ20}
C.~Zhang and X.~Zhang.
\newblock Bound states for logarithmic {S}chr\"{o}dinger equations with
  potentials unbounded below.
\newblock {\em Calc. Var. Partial Differential Equations}, 59(1):Paper No. 23,
  31, 2020.

\bibitem{Zlo10}
K.~G. Zloshchastiev.
\newblock Logarithmic nonlinearity in theories of quantum gravity: {O}rigin of
  time and observational consequences.
\newblock {\em Grav. Cosmol.}, 16:288--297, 2010.

\end{thebibliography}
\end{document}